\newtheorem{theorem}{Theorem}[section]
\newtheorem{lemma}[theorem]{Lemma}
\newtheorem{corollary}[theorem]{Corollary}
\theoremstyle{definition}
\theoremstyle{remark}
\numberwithin{equation}{section}
\begin{document}

\title{Optimal bounds for a Gaussian Arithmetic-Geometric type mean by quadratic and contraharmonic means}

\author[]{Junxuan Shen}
\ead{2014hfser@gmail.com}

\address{Hangzhou Foreign Languages School, Hangzhou 310023, China}
\setcounter{page}{1}

\begin{abstract}
In this paper, we present the best possible parameters $\alpha_i, \beta_i\ (i=1,2,3)$ and $\alpha_4,\beta_4\in(1/2,1)$ such that the double inequalities 
\begin{align*}
\alpha_1Q(a,b)+(1-\alpha_1)C(a,b)&<AG_{Q,C}(a,b)<\beta_1Q(a,b)+(1-\beta_1)C(a,b),\\
\qquad\ Q^{\alpha_2}(a,b)C^{1-\alpha_2}(a,b)&<AG_{Q,C}(a,b)<Q^{\beta_2}(a,b)C^{1-\beta_2}(a,b),\\
\frac{Q(a,b)C(a,b)}{\alpha_3Q(a,b)+(1-\alpha_3)C(a,b)}&<AG_{Q,C}(a,b)<\frac{Q(a,b)C(a,b)}{\beta_3Q(a,b)+(1-\beta_3)C(a,b)},\\
C\left(\sqrt{\alpha_4a^2+(1-\alpha_4)b^2},\sqrt{(1-\alpha_4)a^2+\alpha_4b^2}\right)&<AG_{Q,C}(a,b)<C\left(\sqrt{\beta_4a^2+(1-\beta_4)b^2},\sqrt{(1-\beta_4)a^2+\beta_4b^2}\right)
\end{align*}
hold for all $a, b>0$ with $a\neq b$, where $Q(a,b)$, $C(a,b)$ and $AG(a,b)$ are the quadratic, contraharmonic and Arithmetic-Geometric means, and $AG_{Q,C}(a,b)=AG[Q(a,b),C(a,b)]$. As consequences, we present new bounds for the complete elliptic integral of the first kind.
\begin{keyword}Arithmetic-Geometric mean\sep Complete elliptic integral\sep Quadratic mean\sep Contraharmonic mean.
\MSC{26E60\sep33E05.}
\end{keyword}
\end{abstract}

\maketitle

\section{Introduction}
The classical arithmetic-geometric mean $AG(a,b)$ of two positive numbers $a$ and $b$ is defined by starting with $a_0=a,b_0=b$ and then iterating 
\begin{equation}\label{eq:1.1}
a_{n+1}=A(a_n,b_n),\qquad b_{n+1}=G(a_n,b_n)
\end{equation}for $n\in\mathbb{N}$ until two sequences $\{a_n\}$ and $\{b_n\}$ converge to the same number, where $A(a,b)=(a+b)/2$ and $G(a,b)=\sqrt{ab}$ are arithmetic and geometric means, respectively.

The well-known Gauss identity \cite{AVV} shows that
\begin{equation}\label{eq:1.2}
AG(1,r)\mathcal{K}(\sqrt{1-r^2})=\frac{\pi}{2}
\end{equation}
for $r\in(0,1)$, where $\mathcal{K}(r)=\int^{\pi/2}_{0}(1-r^2\sin^2t)^{-1/2}dt$ ($r\in[0,1)$) is the complete elliptic integral of the first kind. By use of the homogeneity of \eqref{eq:1.1} and \eqref{eq:1.2}, $AG(a,b)$ can be written explicitly as 
\begin{equation}\label{eq:1.3}
AG(a,b)=\frac{\pi}{2\int^{\pi/2}_{0}dt/\sqrt{a^2\cos^2t+b^2\sin^2t}}.
\end{equation}

Let $r\in(0,1)$ and $r'=\sqrt{1-r^2}$, then the complete elliptic integral of the second kind is given by $\mathcal{E}(r)=\int^{\pi/2}_{0}(1-r^2\sin^2t)^{1/2}dt$. We clearly see that $\mathcal{K}(r)$ is strictly increasing from $(0,1)$ onto $(\pi/2, +\infty)$ and $\mathcal{E}(r)$ is strictly decreasing from $(0, 1)$ onto $(1, \pi/2)$. Moreover, $\mathcal{K}(r)$ and $\mathcal{E}(r)$ satisfy the following Landen
identities and derivatives formulas (see \cite[Appendix E, p.474-475]{AVV})

\begin{align*}
\mathcal{K}\left(\frac{2\sqrt{r}}{1+r}\right)=(1+r)\mathcal{K}(r),\quad &\mathcal{E}\left(\frac{2\sqrt{r}}{1+r}\right)=\frac{2\mathcal{E}-r'^2\mathcal{K}}{1+r},\\
\frac{d\mathcal{K}(r)}{dr}=\frac{\mathcal{E}(r)-r'^2\mathcal{K}(r)}{rr'^2},\quad & \frac{d\mathcal{E}(r)}{dr}=\frac{\mathcal{E}(r)-\mathcal{K}(r)}{r},\\
\frac{d\left[\mathcal{E}(r)-r'^2\mathcal{K}(r)\right]}{dr}=r\mathcal{K}(r),\quad &\frac{d\left[\mathcal{K}(r)-\mathcal{E}(r)\right]}{dr}=\frac{r\mathcal{E}(r)}{r'^2},
\end{align*}

Two special values $\mathcal{K}(\sqrt{2}/2)$ and $\mathcal{E}{\sqrt{2}/2}$ will be used later, which can be expressed as (see \cite[Theorem 1.7]{AVV})
\begin{equation*}
\mathcal{K}(\sqrt{2}/2)=\frac{\Gamma^2(1/4)}{4\sqrt{\pi}}=1.854\cdots,\quad \mathcal{E}(\sqrt{2}/2)=\frac{4\Gamma^2(3/4)+\Gamma^2(1/4)}{8\sqrt{\pi}}=1.350\cdots,
\end{equation*}
where $\Gamma(x)=\int^\infty_0t^{x-1}e^{-t}dt$ is the classical Euler's gamma function.

The special bivariate mean $AG_{X,Y}(a,b)$ derived from Arithmetric-Geometric mean for any bivariate means $X(a, b)$ and
$Y(a, b)$ of positive numbers a, b is given by 
\begin{equation}\label{eq:1.4}
AG_{X,Y}(a,b)=AG[X(a,b),Y(a,b)],
\end{equation}
which is called a Arithmetric-Geometric type mean. We denote the pairs of means $\{X,Y\}$ the generating means of the Arithmetric-Geometric type mean defined in \eqref{eq:1.4}.

It is well known that the elliptic elliptic integrals $\mathcal{K}(r)$ and $\mathcal{E}(r)$ and the Gaussian arithmetic-geometric mean $AG(a,b)$ have many applications in mathematics, physics, mechanics, and engineering \cite{Ca,EBJ,Le,Ho,MF,Ma}. Recently, the Arithmetric-Geometric mean has been the subject of intensive research. The double inequalities
\begin{align}\label{eq:1.5}
L_{-1}(a,b)=L(a,b)&<AG(a,b)<L^{2/3}(a^{3/2},b^{3/2}),\\ \label{eq:1.6}
\frac{1+\sqrt{r}}{2}AG(1,\sqrt{r})&<AG(1,r)<\frac{\pi}{2\log(4/r)}
\end{align}hold for all $a,b>0$ with $a\neq b$, where $L_p(a,b)=\left[(b^{p+1}-a^{p+1})/((p+1)(b-a))\right]^{1/p}(p\neq-1,0)$, $L_{-1}(a,b)=L(a,b)=(b-a)/(\log b-\log a)$ and $L_0(a,b)=I(a,b)=(b^b/a^a)^{1/(b-a)}/e$. The left inequality of \eqref{eq:1.5} was first proposed by Carlson and Vuorinen \cite{CV} and also was proved by different methods in \cite{Sa,NS,Ya}. Vamanamurthy and Vuorinen \cite{VV} proved that $AG(a,b)<(\pi/2)L(a,b)$ for all $a,b>0$ with $a\neq b$. The second inequality of \eqref{eq:1.5} was proved by Borwein \cite{BB} and Yang \cite{Ya}. Very recently, Ding and Zhao \cite{DZ} showed that $L_{-1}(a,b)<AG(a,b)<L_{-1/2}(a,b)$ for all $a,b>0$ with $a\neq b$, where $L_{-1}(a,b)$ and $L_{-1/2}(a,b)$ are the best possible lower and upper generalized logarithmic mean bounds, respectively. 

In \cite{Ku}, K\"{u}hnau refined the double inequality \eqref{eq:1.6} and obtained the improved upper bound $\pi(1-r^2/9)/[2\log(4/r)]$. Qiu and Vamanamurthy \cite{QV} presented the new lower and upper bounds for $AG(1,r)$ with $r\in(0,1)$, which are $4\pi/[(9-r^2)\log(4/r)]$ and $(9-r^2)\pi/[18.192\log(4/r)]$, respectively.

Alzer and Qiu \cite{AQ} proved that the double inequality
\begin{equation}\label{eq:1.7}
\frac{1}{\lambda/L(a,b)+(1-\lambda)/A(a,b)}<AG(a,b)<\frac{1}{\mu/L(a,b)+(1-\mu)/A(a,b)}
\end{equation}holds for all $a,b>0$ with $a\neq b$ if and only if $\lambda\geq3/4$ and $\mu\leq2/\pi$.

Chu and Wang \cite{CW} proved that the double inequality 
\begin{equation}\label{eq:1.8}
S_{p}(a,b)<AG(a,b)<S_{q}(a,b),
\end{equation}
for all $a,b>0$ with $a\neq b$ if and only if $p\leq1/2,q\geq1$, where $S_p(a,b)=[(a^{p-1}+b^{p-1})/(a+b)]^{1/(p-2)}$ $(p\neq2)$
and $S_2(a,b)=(a^ab^b)^{1/(a+b)}$ is the $p$th Gini mean of $a$ and $b$. In \cite{YSC}, Yang et al. proved that inequalities 
\begin{align}\label{eq:1.9}
\begin{split}
S_{7/4,-1/4}(a,b)&<AG(a,b)<A^{1/4}(a,b)L^{3/4}(a,b),\\
AG(a,b)&<\sqrt{S_{p,1}(a,b)S_{1-p,1}(a,b)}
\end{split}
\end{align}
hold for all $p\in(1/2,1)$ and $a,b>0$ with $a\neq b$, where $S_{p,q}(a,b)=\left[q(a^p-b^p)/(p(a^q-b^q))\right]^{1/(p-q)}$ is the Stolarsky
mean \cite{YCZ} of $a$ and $b$.

Very recently, optimal bounds for $AG_{A,Q}(a,b)$ by several convex combinations of their generating means were established.
Explicitly, Wang et al. \cite{WQC} presented  the best possible parameters $\alpha_i, \beta_i\ (i=1,2,3)$ such that the double inequalities
\begin{align*}
Q^{\alpha_1}(a,b)A^{1-\alpha_1}(a,b)&<AG_{A,Q}(a,b)<Q^{\beta_1}(a,b)A^{1-\beta_1}(a,b),\\
\alpha_2Q(a,b)+(1-\alpha_2)A(a,b)&<AG_{A,Q}(a,b)<\beta_2Q(a,b)+(1-\beta_2)A(a,b),\\
Q\left[\alpha_3a+(1-\alpha_3)b,(1-\alpha_3)a+\alpha_3b\right]&<AG_{A,Q}(a,b)<Q\left[\beta_3a+(1-\beta_3)b,(1-\beta_3)a+\beta_3b\right]
\end{align*}
hold for all $a,b>0$ with $a\neq b$, where $Q(a,b)=\sqrt{(a^2+b^2)/2}$ is the quadratic mean of $a$ and $b$.

Let $C(a,b)=(a^2+b^2)/(a+b)$ is the contraharmonic mean, then it is easy to verify that the function $\zeta(x)=C\left[\sqrt{xa^2+(1-x)b^2},\sqrt{(1-x)a^2+xb^2}\right]$ is continuous and strictly increasing on $[1/2,1]$. Note that
\begin{equation}\label{eq:1.10}
\zeta(1/2)=Q(a,b)<\min\{Q(a,b),C(a,b)\}<AG_{Q,C}(a,b)<\max\{Q(a,b),C(a,b)\}=C(a,b)=\zeta(1).
\end{equation}

Motivated by inequality \eqref{eq:1.10} and the results of \cite{WQC}, it is natural to ask what are the best possible parameters $\alpha_i, \beta_i\ (i=1,2,3)$ and $\alpha_4,\beta_4\in(1/2,1)$ such that the double inequalities
\begin{align*}
\alpha_1Q(a,b)+(1-\alpha_1)C(a,b)&<AG_{Q,C}(a,b)<\beta_1Q(a,b)+(1-\beta_1)C(a,b),\\
Q^{\alpha_2}(a,b)C^{1-\alpha_2}(a,b)&<AG_{Q,C}(a,b)<Q^{\beta_2}(a,b)C^{1-\beta_2}(a,b),\\
\frac{Q(a,b)C(a,b)}{\alpha_3Q(a,b)+(1-\alpha_3)C(a,b)}&<AG_{Q,C}(a,b)<\frac{Q(a,b)C(a,b)}{\beta_3Q(a,b)+(1-\beta_3)C(a,b)},\\
C\left(\sqrt{\alpha_4a^2+(1-\alpha_4)b^2},\sqrt{(1-\alpha_4)a^2+\alpha_4b^2}\right)&<AG_{Q,C}(a,b)\\
&\hspace{1.5cm}<C\left(\sqrt{\beta_4a^2+(1-\beta_4)b^2},\sqrt{(1-\beta_4)a^2+\beta_4b^2}\right)
\end{align*}
hold for all $a,b>0$ with $a\neq b$ The main purpose of this paper is to answer this question.

\section{Lemmas}

In order to prove the desired theorem, we need several lemmas which we present this section.

\begin{lemma}\label{lm2.1}(see \cite[Theorem 1.25]{AVV}) For $-\infty<a<b<\infty$, let $f,g:[a,b]\rightarrow\mathbb{R}$ be continuous on $[a,b]$, and be differetiable on $(a,b)$, let $g'(x)\neq0$ on (a,b). If $f'(x)/g'(x)$ is increasing (decreasing) on $(a,b)$, then so are
\begin{equation*}
\frac{f(x)-f(a)}{g(x)-g(a)}\quad\text{and}\quad \frac{f(x)-f(b)}{g(x)-g(b)}.
\end{equation*}
If $f'(x)/g'(x)$ is strictly monotone, then the monotonicity in the conclusion is also strict.
\end{lemma}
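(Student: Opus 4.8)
The plan is to derive Lemma~\ref{lm2.1} from Cauchy's mean value theorem together with a sign analysis of the derivatives of the two quotients. First I would pin down the sign of $g'$. Since $g'$ is a derivative it has the intermediate value property on $(a,b)$ by Darboux's theorem, and as it never vanishes there it must be of one constant sign. Replacing the pair $(f,g)$ by $(-f,-g)$ if necessary---an operation that leaves each of the quotients $f'/g'$, $[f-f(a)]/[g-g(a)]$ and $[f-f(b)]/[g-g(b)]$ unchanged while reversing the sign of $g'$---I may assume $g'(x)>0$ on $(a,b)$. Then $g$ is strictly increasing, so $g(x)-g(a)>0$ for $x\in(a,b]$ and $g(x)-g(b)<0$ for $x\in[a,b)$; in particular both quotients in the conclusion are well defined and continuous on their respective half-open intervals.

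Next I would analyse $F(x):=[f(x)-f(a)]/[g(x)-g(a)]$ on $(a,b)$. Differentiating and factoring out $g'(x)$ gives
\[
F'(x)=\frac{g'(x)}{g(x)-g(a)}\left(\frac{f'(x)}{g'(x)}-F(x)\right),
\]
so, since the prefactor is positive, the sign of $F'(x)$ coincides with that of $f'(x)/g'(x)-F(x)$. Applying Cauchy's mean value theorem on $[a,x]$ produces a point $\xi\in(a,x)$ with $F(x)=f'(\xi)/g'(\xi)$; because $f'/g'$ is increasing and $\xi<x$, this yields $F(x)\le f'(x)/g'(x)$, hence $F'(x)\ge0$. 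Therefore $F$ is increasing on $(a,b)$ and, by continuity, on $(a,b]$. If $f'/g'$ is strictly increasing the comparison is strict, so $F'>0$ and $F$ is strictly increasing.

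The endpoint-$b$ quotient $G(x):=[f(x)-f(b)]/[g(x)-g(b)]$ is handled symmetrically. One finds
\[
G'(x)=\frac{g'(x)}{g(x)-g(b)}\left(\frac{f'(x)}{g'(x)}-G(x)\right),
\]
and here Cauchy's mean value theorem on $[x,b]$ gives $\eta\in(x,b)$ with $G(x)=f'(\eta)/g'(\eta)\ge f'(x)/g'(x)$, using $\eta>x$. Since now $g(x)-g(b)<0$ the prefactor is negative, so again $G'(x)\ge0$ and $G$ is increasing on $[a,b)$, strictly so when $f'/g'$ is strictly increasing. Finally the case in which $f'/g'$ is decreasing follows by applying the increasing case to $-f$ in place of $f$, which reverses the monotonicity in both the hypothesis and the conclusion simultaneously.

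The crux---and essentially the only delicate point---is the pointwise comparison $F(x)\le f'(x)/g'(x)$ (respectively $G(x)\ge f'(x)/g'(x)$) that converts monotonicity of the quotient $f'/g'$ into a sign condition on a derivative; the rest is bookkeeping. The places where care is genuinely required are securing a constant sign for $g'$ without assuming it continuous (hence the appeal to Darboux rather than to continuity of $g'$) and keeping the strict versus non-strict alternative consistent across the two endpoint cases.
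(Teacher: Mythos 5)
Your proof is correct, but note that the paper itself does not prove Lemma~\ref{lm2.1} at all: the lemma is quoted from \cite[Theorem 1.25]{AVV}, so the only ``proof'' in the paper is that citation. Your argument is a complete, self-contained derivation, and it is essentially the standard proof of the monotone l'H\^{o}pital rule---the same line of reasoning used in the cited reference: Darboux's theorem gives $g'$ a constant sign (this is exactly the right tool, since $g'$ need not be continuous); the factorization
\[
F'(x)=\frac{g'(x)}{g(x)-g(a)}\left(\frac{f'(x)}{g'(x)}-F(x)\right)
\]
reduces everything to the pointwise comparison $F(x)\le f'(x)/g'(x)$, which Cauchy's mean value theorem supplies because the intermediate point $\xi$ lies to the left of $x$; and the endpoint-$b$ quotient, the strict variant, and the decreasing case follow from the symmetries you describe (negative prefactor compensated by the reversed comparison at $\eta>x$, and the substitution $f\mapsto -f$). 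I checked the sign bookkeeping in both endpoint cases and the strict/non-strict distinction, and all of it is sound. What your write-up buys is self-containedness: the paper's development would no longer depend on an external reference for this lemma.
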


\begin{lemma}\label{lm2.2}
\begin{enumerate}[itemindent=-0.5em,label=(\arabic*)]
\item $[\mathcal{E}(r)-r'^2\mathcal{K}(r)]/r^2$ is strictly increasing from $(0,1)$ onto $(\pi/4,1)$; \label{it2.2.1}
\item For each $c\in [1/2,\infty)$, the function $r'^c\mathcal{K}(r)$ is decreasing from $[0,1)$ onto $(0,\frac{\pi}{2}]$; \label{it2.2.2}
\item $[\mathcal{E}(r)-r'^2\mathcal{K}(r)]/[r^2\mathcal{K}(r)]$ is strictly decreasing on $(0,1)$;\label{it2.2.3}
\item$[\mathcal{K}(r)-\mathcal{E}(r)]/r^2$ is strictly increasing on $(0,1)$.\label{it2.2.4}
\end{enumerate}
\end{lemma}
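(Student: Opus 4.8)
The plan is to use the monotone form of L'Hôpital's rule (Lemma~\ref{lm2.1}) as the main engine for all four parts, reducing each monotonicity claim to the behaviour of a ratio of derivatives that can be read off directly from the derivative formulas recorded in the Introduction, and then to supply the endpoint limits separately. For part~\ref{it2.2.1} I would put $f(r)=\mathcal{E}(r)-r'^2\mathcal{K}(r)$ and $g(r)=r^2$, so that $f(0)=g(0)=0$; since $d[\mathcal{E}-r'^2\mathcal{K}]/dr=r\mathcal{K}$ we get $f'(r)/g'(r)=\mathcal{K}(r)/2$, which is strictly increasing because $\mathcal{K}$ is, and Lemma~\ref{lm2.1} delivers the monotonicity of the quotient. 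The boundary values follow from $f'(r)/g'(r)\to\mathcal{K}(0)/2=\pi/4$ as $r\to0^+$, and from $\mathcal{E}(1)=1$ together with $r'^2\mathcal{K}(r)\to0$ as $r\to1^-$. Part~\ref{it2.2.4} is entirely analogous: with $f=\mathcal{K}-\mathcal{E}$, $g=r^2$, the formula $d[\mathcal{K}-\mathcal{E}]/dr=r\mathcal{E}/r'^2$ gives $f'/g'=\mathcal{E}/(2r'^2)$, and a one-line differentiation shows this is increasing provided its numerator $\mathcal{E}(1+r^2)-r'^2\mathcal{K}$ is positive; but $\mathcal{E}>r'^2\mathcal{K}$ is already furnished by part~\ref{it2.2.1}, so this costs nothing.

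The crux is part~\ref{it2.2.3}, from which I will also deduce part~\ref{it2.2.2}. Taking $u=\mathcal{E}-r'^2\mathcal{K}$ and $v=r^2\mathcal{K}$ (both vanishing at $0$) and using $u'=r\mathcal{K}$ together with $\mathcal{K}'=(\mathcal{E}-r'^2\mathcal{K})/(rr'^2)$, the ratio of derivatives simplifies to
\[
\frac{u'(r)}{v'(r)}=\frac{r'^2\mathcal{K}(r)}{r'^2\mathcal{K}(r)+\mathcal{E}(r)}.
\]
By Lemma~\ref{lm2.1} it then suffices to prove this is strictly decreasing, which is equivalent to $r'^2\mathcal{K}/\mathcal{E}$ being strictly decreasing. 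Differentiating the latter and clearing positive factors, I expect the sign to be controlled by $(\mathcal{K}-\mathcal{E})^2-r^2\mathcal{K}^2$, so the main obstacle is showing this quantity is negative, i.e. $\mathcal{K}-\mathcal{E}<r\mathcal{K}$. I would secure this through the auxiliary inequality $\mathcal{E}>r'\mathcal{K}$ on $(0,1)$, which is itself proved by noting that $\mathcal{E}-r'\mathcal{K}$ vanishes at $0$ and has derivative $\tfrac{\mathcal{E}-\mathcal{K}}{r}\bigl(1-\tfrac{1}{r'}\bigr)>0$ (a product of two negative factors); since $1-r<r'$ for $r\in(0,1)$, this yields $(1-r)\mathcal{K}<r'\mathcal{K}<\mathcal{E}$ and hence the required bound.

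Finally, for part~\ref{it2.2.2} I write $h(r)=r'^c\mathcal{K}(r)$ and compute the logarithmic derivative
\[
\frac{h'(r)}{h(r)}=\frac{1}{r'^2}\left[\frac{\mathcal{E}(r)-r'^2\mathcal{K}(r)}{r\mathcal{K}(r)}-cr\right],
\]
so that $h'\le0$ on $(0,1)$ is equivalent to $[\mathcal{E}-r'^2\mathcal{K}]/[r^2\mathcal{K}]\le c$. By part~\ref{it2.2.3} the left-hand side is strictly decreasing, so its supremum is its limit at $0$, which by part~\ref{it2.2.1} equals $(\pi/4)/(\pi/2)=1/2$; hence the bound holds for every $c\ge1/2$ and $h$ is strictly decreasing. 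The range then comes from the endpoint values $h(0)=\pi/2$ and $\lim_{r\to1^-}h(r)=0$, the latter valid because $c>0$ forces $r'^c$ to overwhelm the logarithmic blow-up of $\mathcal{K}$, giving the image $(0,\pi/2]$. In summary, parts~\ref{it2.2.1} and~\ref{it2.2.4} are routine applications of Lemma~\ref{lm2.1}, part~\ref{it2.2.3} is the substantive step whose difficulty lies entirely in the inequality $\mathcal{E}>r'\mathcal{K}$, and part~\ref{it2.2.2} follows by feeding parts~\ref{it2.2.1} and~\ref{it2.2.3} into a logarithmic-derivative estimate.
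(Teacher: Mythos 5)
Your proposal is correct, but it takes a genuinely different route from the paper for a simple reason: the paper does not prove this lemma at all. Its entire ``proof'' is a citation to \cite{AVV} (Exercise 3.43 (11) and (46), Theorem 3.21 (1) and (7)), whereas you derive all four parts from material already displayed in the paper, namely Lemma \ref{lm2.1} and the derivative formulas for $\mathcal{K}$, $\mathcal{E}$, $\mathcal{E}-r'^2\mathcal{K}$ and $\mathcal{K}-\mathcal{E}$. I verified your computations and they hold up: in part (1), $f'/g'=\mathcal{K}(r)/2$ and the endpoint limits $\pi/4$ and $1$ are right (the latter using $\mathcal{E}(1)=1$ and $r'^2\mathcal{K}(r)\to0$); in part (4) the derivative of $\mathcal{E}/(2r'^2)$ has numerator proportional to $\mathcal{E}(1+r^2)-r'^2\mathcal{K}$, which is positive because part (1) gives $\mathcal{E}>r'^2\mathcal{K}$; in part (3) the ratio of derivatives is indeed $r'^2\mathcal{K}/(r'^2\mathcal{K}+\mathcal{E})$, its monotonicity reduces to that of $r'^2\mathcal{K}/\mathcal{E}$, whose derivative has the sign of $(\mathcal{K}-\mathcal{E})^2-r^2\mathcal{K}^2$, and your auxiliary inequality $\mathcal{E}>r'\mathcal{K}$ (proved correctly: $\mathcal{E}-r'\mathcal{K}$ vanishes at $0$ and its derivative $\frac{\mathcal{E}-\mathcal{K}}{r}\bigl(1-\frac{1}{r'}\bigr)$ is a product of two negative factors) together with $1-r<r'$ closes the gap; and in part (2) the logarithmic derivative correctly reduces the claim to $[\mathcal{E}-r'^2\mathcal{K}]/[r^2\mathcal{K}]\le c$, which follows from parts (3) and (1) since that quotient is strictly decreasing with limit $(\pi/4)/(\pi/2)=1/2$ at $0^+$. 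What the paper's citation buys is brevity and the authority of a standard reference; what your argument buys is self-containedness --- every ingredient you invoke is already stated in the paper --- and a transparent logical structure in which (1) feeds (4), and (1) together with (3) feeds (2), exposing that the only substantive new fact required beyond the listed formulas is the inequality $\mathcal{E}(r)>r'\mathcal{K}(r)$ on $(0,1)$.
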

\begin{proof}
Parts (1)-(4) follow from \cite[Exercise 3.43 (11) and (46), Theorem 3.21 (1) and (7)]{AVV}.
\end{proof}

\begin{lemma}\label{lm2.3}
Let $\delta_1=(2+\sqrt{2})\left(1-\pi/[2\mathcal{K}(\sqrt{2}/2)]\right)=0.5216\cdots$ and 
\begin{equation*}
f(r)=\frac{1-\pi/[2\mathcal{K}(r)]}{1-\sqrt{1-r^2}},
\end{equation*}
then $f(r)$ is strictly increasing from $(0,\sqrt{2}/2)$ onto $(1/2,\delta_1)$.
\end{lemma}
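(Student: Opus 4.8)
The plan is to exhibit $f$ as a ratio of two functions that both vanish at $r=0$ and then apply the monotone form of l'Hôpital's rule (Lemma \ref{lm2.1}). I would write $f(r)=N(r)/D(r)$ with
\[
N(r)=1-\frac{\pi}{2\mathcal{K}(r)},\qquad D(r)=1-\sqrt{1-r^2}=1-r'.
\]
Since $\mathcal{K}(0^+)=\pi/2$, we have $N(0^+)=D(0^+)=0$, so Lemma \ref{lm2.1} applies once the monotonicity of $N'(r)/D'(r)$ is under control; note also that $D'(r)=r/r'\neq0$ on $(0,\sqrt2/2)$, as the hypothesis of Lemma \ref{lm2.1} requires.

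Next I would differentiate. Using the derivative formula $\mathcal{K}'(r)=[\mathcal{E}(r)-r'^2\mathcal{K}(r)]/(rr'^2)$ stated in the introduction gives $N'(r)=\tfrac{\pi}{2}\,[\mathcal{E}(r)-r'^2\mathcal{K}(r)]/[rr'^2\mathcal{K}(r)^2]$, and together with $D'(r)=r/r'$ this yields
\[
\frac{N'(r)}{D'(r)}=\frac{\pi}{2}\cdot\frac{\mathcal{E}(r)-r'^2\mathcal{K}(r)}{r^2}\cdot\frac{1}{r'\mathcal{K}(r)^2}.
\]
The crucial point is the choice of factorization. The first factor $[\mathcal{E}(r)-r'^2\mathcal{K}(r)]/r^2$ is strictly increasing and positive by the first part of Lemma \ref{lm2.2} (its range is $(\pi/4,1)$), while the second factor can be rewritten as $1/[\,r'^{1/2}\mathcal{K}(r)\,]^2$; since $r'^{1/2}\mathcal{K}(r)$ is positive and decreasing by the second part of Lemma \ref{lm2.2} with $c=1/2$, its reciprocal square is positive and increasing. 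A product of two positive increasing functions is increasing, so $N'(r)/D'(r)$ is strictly increasing on $(0,\sqrt2/2)$, and Lemma \ref{lm2.1} then gives that $f(r)=[N(r)-N(0)]/[D(r)-D(0)]$ is strictly increasing there.

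Finally I would identify the endpoint values. Because $N(0)=D(0)=0$ and $N'/D'$ has a finite one-sided limit, l'Hôpital gives $\lim_{r\to0^+}f(r)=\lim_{r\to0^+}N'(r)/D'(r)$; evaluating the two factors at $r=0^+$ (the first tends to $\pi/4$, the second to $4/\pi^2$) yields $\tfrac{\pi}{2}\cdot\tfrac{\pi}{4}\cdot\tfrac{4}{\pi^2}=\tfrac12$. At the right endpoint I evaluate $f$ directly: with $r'=\sqrt2/2$ one has $1/(1-\sqrt2/2)=2+\sqrt2$, whence $f(\sqrt2/2)=(2+\sqrt2)\bigl(1-\pi/[2\mathcal{K}(\sqrt2/2)]\bigr)=\delta_1$. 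Continuity together with strict monotonicity then forces $f$ to map $(0,\sqrt2/2)$ bijectively onto $(1/2,\delta_1)$.

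The main obstacle I anticipate is precisely getting the factorization right so that both factors are monotone in the \emph{same} direction. The naive grouping $[\mathcal{E}(r)-r'^2\mathcal{K}(r)]/[r^2\mathcal{K}(r)]$ is decreasing by the third part of Lemma \ref{lm2.2}, which is useless for a product argument; the key trick is instead to split off exactly one power of $\mathcal{K}$ together with $r'^{1/2}$ so as to invoke the $c=1/2$ case of the second part of Lemma \ref{lm2.2}. Once this grouping is found, everything else reduces to routine limit computations.
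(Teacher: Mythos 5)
Your proposal is correct and follows essentially the same route as the paper: the same splitting $f=f_1/f_2$ with $f_1(0)=f_2(0)=0$, the identical factorization $f_1'/f_2'=\frac{\pi}{2}\cdot\frac{\mathcal{E}(r)-r'^2\mathcal{K}(r)}{r^2}\cdot\frac{1}{[r'^{1/2}\mathcal{K}(r)]^2}$ handled via parts (1) and (2) of Lemma \ref{lm2.2}, then Lemma \ref{lm2.1} and the endpoint limits $1/2$ and $\delta_1$. The paper's proof is just a more compressed version of exactly this argument.
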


\begin{proof}
Let $f_1(r)=1-\pi/[2\mathcal{K}(r)]$ and $f_2(r)=1-\sqrt{1-r^2}$, then we clearly see that
$f_1(0)=f_2(0)=0, f(r)=f_1(r)/f_2(r)$ and
\begin{equation}\label{eq:2.1}
\frac{f'_1(r)}{f'_2(r)}=\frac{\pi}{2}\cdot\frac{\mathcal{E}(r)-r'^2\mathcal{K}(r)}{r^2}\cdot\frac{1}{\left[r'^{1/2}\mathcal{K}(r)\right]^2}.\end{equation}

It follows from Lemma \ref{lm2.2} \ref{it2.2.1} and \ref{it2.2.2} that $[\mathcal{E}(r)-r'^2\mathcal{K}(r)]/r^2$ and $1/\left[r'^{1/2}\mathcal{K}(r)\right]^2$ are strictly increasing on $(0,1)$. This conjunction with \eqref{eq:2.1} implies that $f'_1(r)/f'_2(r)$ is strictly increasing on $(0,1)$. 

Therefore, Lemma \ref{lm2.3} follows immediately from Lemma \ref{lm2.1} and the limiting values $f(0^+)=1/2$ and $f(\sqrt{2}/2^-)=\delta_1$.
\end{proof}

\begin{lemma}\label{lm2.4}
Let $\delta_2=2\log[2\mathcal{K}(\sqrt{2}/2)/\pi]/\log2=0.4784\cdots$ and 
\begin{equation*}
g(r)=\frac{\log\pi/2-\log\mathcal{K}(r)}{\log\sqrt{1-r^2}},
\end{equation*}
then $g(r)$ is strictly decreasing from $(0,\sqrt{2}/2)$ onto $(\delta_2,1/2)$.
\end{lemma}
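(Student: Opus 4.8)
The plan is to follow the same template as the proof of Lemma \ref{lm2.3}, replacing the role of part \ref{it2.2.1}--\ref{it2.2.2} of Lemma \ref{lm2.2} by part \ref{it2.2.3}. First I would set $g_1(r)=\log(\pi/2)-\log\mathcal{K}(r)$ and $g_2(r)=\log\sqrt{1-r^2}=\tfrac12\log(1-r^2)$, so that $g(r)=g_1(r)/g_2(r)$. Since $\mathcal{K}(0^+)=\pi/2$, both functions vanish at the left endpoint: $g_1(0)=g_2(0)=0$. This is exactly the setup needed to apply the monotone L'Hôpital rule (Lemma \ref{lm2.1}) on $(0,\sqrt{2}/2)$, writing $g=(g_1-g_1(0))/(g_2-g_2(0))$.

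Next I would compute the derivative ratio using the derivative formula $d\mathcal{K}(r)/dr=[\mathcal{E}(r)-r'^2\mathcal{K}(r)]/(rr'^2)$ recorded in the introduction. A direct calculation gives $g_1'(r)=-[\mathcal{E}(r)-r'^2\mathcal{K}(r)]/[rr'^2\mathcal{K}(r)]$ and $g_2'(r)=-r/r'^2$, whence
\begin{equation*}
\frac{g_1'(r)}{g_2'(r)}=\frac{\mathcal{E}(r)-r'^2\mathcal{K}(r)}{r^2\mathcal{K}(r)}.
\end{equation*}
By Lemma \ref{lm2.2} \ref{it2.2.3} the right-hand side is strictly decreasing on $(0,1)$, so $g_1'/g_2'$ is strictly decreasing there. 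Lemma \ref{lm2.1} then yields that $g(r)$ is strictly decreasing on $(0,\sqrt{2}/2)$.

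It remains to identify the endpoint values. At the right endpoint $r=\sqrt{2}/2^-$ I would substitute $\sqrt{1-r^2}=\sqrt{2}/2$, so $\log\sqrt{1-r^2}=-\tfrac12\log 2$, which gives directly $g(\sqrt{2}/2^-)=2\log[2\mathcal{K}(\sqrt{2}/2)/\pi]/\log 2=\delta_2$. At the left endpoint the monotone rule guarantees $g(0^+)=\lim_{r\to0^+}g_1'(r)/g_2'(r)$; using $\mathcal{K}(0^+)=\pi/2$ together with the limiting value $\pi/4$ of $[\mathcal{E}(r)-r'^2\mathcal{K}(r)]/r^2$ from Lemma \ref{lm2.2} \ref{it2.2.1}, this limit equals $(\pi/4)/(\pi/2)=1/2$. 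Since $\delta_2<1/2$, the strictly decreasing function $g$ maps $(0,\sqrt{2}/2)$ onto $(\delta_2,1/2)$, which completes the argument. I do not anticipate a genuine obstacle here: the only mild care required is recognizing that the derivative quotient simplifies \emph{exactly} to the expression whose monotonicity is supplied by Lemma \ref{lm2.2} \ref{it2.2.3}, and correctly extracting $g(0^+)$ through the limit of $g_1'/g_2'$ rather than attempting to evaluate $g$ directly at $0$.
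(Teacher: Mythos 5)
Your proposal is correct and follows essentially the same route as the paper: the same decomposition $g=g_1/g_2$ with $g_1(0)=g_2(0)=0$, the same derivative ratio $g_1'/g_2'=[\mathcal{E}(r)-r'^2\mathcal{K}(r)]/[r^2\mathcal{K}(r)]$, the monotonicity supplied by Lemma \ref{lm2.2} \ref{it2.2.3}, Lemma \ref{lm2.1}, and the endpoint values $g(0^+)=1/2$, $g(\sqrt{2}/2^-)=\delta_2$. In fact your write-up is slightly cleaner: the paper's proof asserts that $g_1'/g_2'$ is strictly \emph{increasing} (an evident typo, since Lemma \ref{lm2.2} \ref{it2.2.3} gives decreasing, which is what the claimed monotonicity of $g$ requires), whereas you state the correct direction and also justify the limit $g(0^+)=1/2$ explicitly.
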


\begin{proof}
Let $g_1(r)=\log\pi/2-\log\mathcal{K}(r)$ and $g_2(r)=\log\sqrt{1-r^2}$, then we clearly see that
$g_1(0)=g_2(0)=0, g(r)=g_1(r)/g_2(r)$ and
\begin{equation}\label{eq:2.2}
\frac{g'_1(r)}{g'_2(r)}=\frac{\mathcal{E}(r)-r'^2\mathcal{K}(r)}{r^2\mathcal{K}(r)}.\end{equation}

It follows from Lemma \ref{lm2.2} \ref{it2.2.3} and \eqref{eq:2.2} that $g'_1(r)/g'_2(r)$ is strictly increasing on $(0,1)$. 

Therefore, Lemma \ref{lm2.4} follows from Lemma \ref{lm2.1} and the limiting values $g(0^+)=1/2$ and $g(\sqrt{2}/2^-)=\delta_2$.
\end{proof}

\begin{lemma}\label{lm2.5}
Let $\delta_3=(\sqrt{2}+1)\left(\sqrt{2}-[2\mathcal{K}(\sqrt{2}/2)]/\pi\right)=0.5646\cdots$ and 
\begin{equation*}
h(r)=\frac{2\sqrt{1-r^2}\mathcal{K}(r)/\pi-1}{\sqrt{1-r^2}-1},
\end{equation*}
then $h(r)$ is strictly increasing from $(0,\sqrt{2}/2)$ onto $(1/2,\delta_3)$.
\end{lemma}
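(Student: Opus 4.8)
The plan is to follow the same monotone L'Hôpital strategy used in Lemmas \ref{lm2.3} and \ref{lm2.4}. First I would set $h_1(r) = 2\sqrt{1-r^2}\,\mathcal{K}(r)/\pi - 1$ and $h_2(r) = \sqrt{1-r^2} - 1$, so that $h(r) = h_1(r)/h_2(r)$. Writing $r' = \sqrt{1-r^2}$, the identity $\mathcal{K}(0) = \pi/2$ gives $h_1(0) = 0$, and obviously $h_2(0) = 0$, which is exactly the normalization needed to invoke Lemma \ref{lm2.1} with left endpoint $0$. Note also that $h_2'(r)=-r/r'\neq0$ on the relevant interval, so the hypotheses of Lemma \ref{lm2.1} are met.

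The next step is to differentiate. Since $dr'/dr = -r/r'$, I have $h_2'(r) = -r/r'$, while the derivative formula for $\mathcal{K}$ gives $h_1'(r) = (2/\pi)\,d[r'\mathcal{K}(r)]/dr$. Expanding by the product rule and combining over the common denominator $rr'$, the term $-r'^2\mathcal{K}$ coming from $d\mathcal{K}/dr$ merges with the $-r^2\mathcal{K}$ coming from differentiating $r'$; applying $r^2 + r'^2 = 1$ collapses this to $d[r'\mathcal{K}]/dr = (\mathcal{E}-\mathcal{K})/(rr')$. Forming the quotient, the factors of $r'$ cancel and I obtain the clean expression
\begin{equation*}
\frac{h_1'(r)}{h_2'(r)} = \frac{2}{\pi}\cdot\frac{\mathcal{K}(r)-\mathcal{E}(r)}{r^2}.
\end{equation*}

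By Lemma \ref{lm2.2} \ref{it2.2.4}, $[\mathcal{K}(r)-\mathcal{E}(r)]/r^2$ is strictly increasing on $(0,1)$, hence $h_1'/h_2'$ is strictly increasing there; Lemma \ref{lm2.1} then yields that $h = h_1/h_2$ is strictly increasing on $(0,\sqrt{2}/2)$. Finally I would compute the two limiting values: as $r\to 0^+$ the ratio tends to $(2/\pi)\cdot(\pi/4) = 1/2$ (since $[\mathcal{K}-\mathcal{E}]/r^2 \to \pi/4$), and at $r = \sqrt{2}/2$ one has $r' = \sqrt{2}/2$, so a direct substitution of $\mathcal{K}(\sqrt{2}/2)$ followed by rationalizing the denominator $\sqrt{2}/2 - 1$ gives the stated value $\delta_3 = (\sqrt{2}+1)(\sqrt{2} - 2\mathcal{K}(\sqrt{2}/2)/\pi)$.

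The only mildly delicate points are the derivative simplification — one must invoke $r^2+r'^2=1$ at the right moment to reach the compact $(\mathcal{K}-\mathcal{E})/r^2$ form rather than a messier expression — and the endpoint algebra confirming that the substituted value at $r=\sqrt{2}/2$ coincides with $\delta_3$; both are routine once set up, and no genuine obstacle is expected beyond careful bookkeeping.
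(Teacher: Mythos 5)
Your proof is correct and follows essentially the same route as the paper: the identical decomposition $h=h_1/h_2$ with $h_1(0)=h_2(0)=0$, the same appeal to Lemma \ref{lm2.2} \ref{it2.2.4} for the monotonicity of the derivative ratio, Lemma \ref{lm2.1} to transfer it to $h$, and the same endpoint evaluations. In fact your computed ratio $h_1'(r)/h_2'(r)=\tfrac{2}{\pi}\cdot\bigl[\mathcal{K}(r)-\mathcal{E}(r)\bigr]/r^2$ is the accurate one --- the paper's equation \eqref{eq:2.3} drops the harmless constant factor $2/\pi$, which does not affect the monotonicity argument but is exactly what makes the limit $h(0^+)=(2/\pi)\cdot(\pi/4)=1/2$ come out right, as in your calculation.
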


\begin{proof}
Let $h_1(r)=2\sqrt{1-r^2}\mathcal{K}(r)/\pi-1$ and $h_2(r)=\sqrt{1-r^2}-1$, then it is easy to see that
$h_1(0)=h_2(0)=0, h(r)=h_1(r)/h_2(r)$ and
\begin{equation}\label{eq:2.3}
\frac{h'_1(r)}{h'_2(r)}=\frac{\mathcal{K}(r)-\mathcal{E}(r)}{r^2}.
\end{equation}

Lemma \ref{lm2.2} \ref{it2.2.4} and \eqref{eq:2.3} lead to the conclusion that $h'_1(r)/h'_2(r)$ is strictly increasing on $(0,\sqrt{2}/2)$.

Therefore, Lemma \ref{lm2.5} follows easily from Lemma \ref{lm2.1} and the limiting values $h(0^+)=1/2$ and $h(\sqrt{2}/2^-)=\delta_3$.
\end{proof}

\begin{lemma}\label{lm2.6}
The inequality 
\begin{equation*}
\mathcal{K}(r)>\frac{\pi}{2}\left(1+\frac{r^2}{4}\right)
\end{equation*}
holds for $r\in(0,1)$.
\end{lemma}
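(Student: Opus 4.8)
The plan is to recast the claimed inequality as a lower bound for the quotient $[\mathcal{K}(r)-\pi/2]/r^2$ and to prove that this quotient exceeds $\pi/8$ on $(0,1)$ by combining the monotone L'H\^opital rule (Lemma \ref{lm2.1}) with the monotonicity already recorded in Lemma \ref{lm2.2}.

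First I would set $f(r)=\mathcal{K}(r)-\pi/2$ and $g(r)=r^2$, observing that $f(0^+)=g(0)=0$ since $\mathcal{K}(0)=\pi/2$. Using the derivative formula $d\mathcal{K}(r)/dr=[\mathcal{E}(r)-r'^2\mathcal{K}(r)]/(rr'^2)$ from the introduction, I compute
\[
\frac{f'(r)}{g'(r)}=\frac{\mathcal{K}'(r)}{2r}=\frac{1}{2}\cdot\frac{\mathcal{E}(r)-r'^2\mathcal{K}(r)}{r^2}\cdot\frac{1}{r'^2}.
\]
The purpose of this factorization is that each factor is a positive, strictly increasing function on $(0,1)$: the middle factor increases from $\pi/4$ to $1$ by Lemma \ref{lm2.2}\ref{it2.2.1}, while $1/r'^2=1/(1-r^2)$ is plainly strictly increasing. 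Hence $f'(r)/g'(r)$ is strictly increasing on $(0,1)$, and by Lemma \ref{lm2.1} so is $F(r):=f(r)/g(r)=[\mathcal{K}(r)-\pi/2]/r^2$.

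The remaining step is to identify the infimum of $F$. Since $F$ is strictly increasing, $F(r)>F(0^+)$, and the limiting value is read off from the ratio of derivatives: $F(0^+)=\lim_{r\to0^+}f'(r)/g'(r)=\tfrac{1}{2}\cdot\tfrac{\pi}{4}\cdot1=\pi/8$. Therefore $[\mathcal{K}(r)-\pi/2]/r^2>\pi/8$ for every $r\in(0,1)$, which rearranges to $\mathcal{K}(r)>\tfrac{\pi}{2}+\tfrac{\pi}{8}r^2=\tfrac{\pi}{2}(1+r^2/4)$, as claimed.

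I do not expect a genuine obstacle, since the argument is a direct application of the machinery assembled in the preceding lemmas; the only point needing a little care is the factorization of $f'/g'$ into two separately monotone pieces, because Lemma \ref{lm2.2}\ref{it2.2.1} controls the combination $[\mathcal{E}(r)-r'^2\mathcal{K}(r)]/r^2$ rather than $\mathcal{K}'(r)$ itself, so the extra factor $1/r'^2$ must be peeled off explicitly. (An alternative one-line proof follows from the Maclaurin expansion $\mathcal{K}(r)=\tfrac{\pi}{2}\sum_{n\ge0}[(2n-1)!!/(2n)!!]^2r^{2n}$, whose first two terms are exactly $\tfrac{\pi}{2}(1+r^2/4)$ and whose remaining terms are all positive; but the monotonicity proof is more in the spirit of the present paper.)
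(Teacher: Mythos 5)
Your proof is correct and is essentially the paper's own argument: the paper works with $\mu(r)=[2\mathcal{K}(r)/\pi-1]/r^2$, which is just $\tfrac{2}{\pi}$ times your $F(r)=[\mathcal{K}(r)-\pi/2]/r^2$, and it uses the identical factorization of the derivative quotient, the same appeal to Lemma \ref{lm2.1} and Lemma \ref{lm2.2}\ref{it2.2.1}, and the same limiting value at $0^+$ (the paper's $\mu(0^+)=1/4$ corresponding to your $F(0^+)=\pi/8$). No further comment is needed.
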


\begin{proof}
In order to prove this lemma, it suffices to show the inequality
\begin{equation}\label{eq:2.4}
\frac{2\mathcal{K}(r)/\pi-1}{r^2}>\frac{1}{4}
\end{equation}
holds for $r\in(0,1)$.

Let $\mu(r)=[2\mathcal{K}(r)/\pi-1]/r^2$, $\mu_1(r)=2\mathcal{K}(r)/\pi-1$ and $\mu_2(r)=r^2$, then we clearly see that
\begin{equation}\label{eq:2.5}
\mu_1(0)=\mu_2(0)=0,\quad \mu(r)=\frac{\mu_1(r)}{\mu_2(r)}.
\end{equation}

Taking the derivative of $\mu_1(r)$ and $\mu_2(r)$ yieds
\begin{equation}\label{eq:2.6}
\frac{\mu'_1(r)}{\mu'_2(r)}=\frac{1}{\pi}\cdot\frac{\mathcal{E}(r)-r'^2\mathcal{K}(r)}{r^2}\cdot\frac{1}{r'^2}.
\end{equation}

It follows from \eqref{eq:2.6} and Lemma \ref{lm2.2} \ref{it2.2.1} together with the monotonicity of $r'=\sqrt{1-r^2}$ that $\mu'_1(r)/\mu'_2(r)$ is strictly increasing on $(0,1)$. This conjunction with \eqref{eq:2.5} and Lemma \ref{lm2.1}  implies that $\mu(r)$ is strictly increasing on $(0,1)$.

Therefore, the desired inequality \eqref{eq:2.4} follows from $\mu(0^+)=1/4$ and the monotonicity of $\mu(r)$.
\end{proof}

\begin{lemma}\label{lm2.7}
 The function $\eta(r)=\left[\pi/\left(2\mathcal{K}(r)\right)+r^2/4-1\right]/r^4$ is strictly decreasing from $(0,\sqrt{2}/2)$ onto $(-7/2+2\pi/\mathcal{K}(\sqrt{2}/2),-5/64)$.
\end{lemma}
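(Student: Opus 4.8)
The plan is to prove that $\eta$ is strictly decreasing on $(0,\sqrt{2}/2)$ and then read off the range from the two endpoint limits, following the pattern of Lemmas \ref{lm2.3}--\ref{lm2.6}. Write $\eta=\eta_1/\eta_2$ with $\eta_1(r)=\pi/[2\mathcal{K}(r)]+r^2/4-1$ and $\eta_2(r)=r^4$. Since $\mathcal{K}(0)=\pi/2$ we have $\eta_1(0^+)=\eta_2(0)=0$, so the monotone form of L'H\^{o}pital's rule (Lemma \ref{lm2.1}) applies. Using $\mathcal{K}'(r)=[\mathcal{E}(r)-r'^2\mathcal{K}(r)]/(rr'^2)$ I would compute
\begin{equation*}
\frac{\eta_1'(r)}{\eta_2'(r)}=\frac{1}{8r^2}\left(1-\frac{\pi[\mathcal{E}(r)-r'^2\mathcal{K}(r)]}{r^2r'^2\mathcal{K}^2(r)}\right).
\end{equation*}
By Lemma \ref{lm2.2}\ref{it2.2.1} the factor $[\mathcal{E}-r'^2\mathcal{K}]/r^2$ increases from $\pi/4$, and by Lemma \ref{lm2.2}\ref{it2.2.2} (with $c=1$) $r'\mathcal{K}$ decreases, so the subtracted fraction is increasing with limiting value $1$ at the origin and the parenthesis is negative. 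Rewriting the ratio as $-[\pi(\mathcal{E}-r'^2\mathcal{K})/r^2-(r'\mathcal{K})^2]/[8(rr'\mathcal{K})^2]$ exhibits a numerator that is nonnegative and increasing together with a denominator $(rr'\mathcal{K})^2$ that is increasing on $(0,\sqrt{2}/2)$ (here $rr'$ increases on this subinterval).

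The difficulty is that $\eta_1'/\eta_2'$ is again of the indeterminate form $0/0$ at the origin (increasing over increasing), so a single application of Lemma \ref{lm2.1} does not close the argument. I would therefore apply Lemma \ref{lm2.1} a second time. Setting $g(r)=\pi[\mathcal{E}-r'^2\mathcal{K}]/(r^2r'^2\mathcal{K}^2)-1$, one checks $\eta_1'/\eta_2'=-g(r)/(8r^2)$, so it suffices to prove that $g(r)/r^2$ is increasing; differentiating $g$ with the help of $[\mathcal{E}-r'^2\mathcal{K}]'=r\mathcal{K}$ and $\mathcal{E}'=(\mathcal{E}-\mathcal{K})/r$ and simplifying (several $\mathcal{K}^2$, $\mathcal{E}\mathcal{K}$ and $\mathcal{E}^2$ terms collapse) I expect
\begin{equation*}
\frac{g'(r)}{2r}=\frac{\pi}{2}\cdot\frac{2\mathcal{E}(r)[\mathcal{K}(r)-\mathcal{E}(r)]-r^2r'^2\mathcal{K}^2(r)}{r^4r'^4\mathcal{K}^3(r)}.
\end{equation*}
Thus, by Lemma \ref{lm2.1}, everything reduces to showing that
\begin{equation*}
\Theta(r)=\frac{2\mathcal{E}(r)[\mathcal{K}(r)-\mathcal{E}(r)]-r^2r'^2\mathcal{K}^2(r)}{r^4r'^4\mathcal{K}^3(r)}
\end{equation*}
is strictly increasing on $(0,\sqrt{2}/2)$.

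Establishing the monotonicity of $\Theta$ is the main obstacle. Its numerator is itself $O(r^4)$ at the origin, so the naive monotone L'H\^{o}pital rule stalls once more; the plan is to prove it either by a third application of Lemma \ref{lm2.1} or, more cleanly, by combining the competing monotonicities furnished by Lemma \ref{lm2.2} (namely $(\mathcal{K}-\mathcal{E})/r^2$ increasing from part \ref{it2.2.4} and $rr'\mathcal{K}$ increasing on $(0,\sqrt{2}/2)$ from part \ref{it2.2.2}) with the sharp lower bound $\mathcal{K}(r)>(\pi/2)(1+r^2/4)$ of Lemma \ref{lm2.6} to control the sign of the numerator and of the relevant derivative. (An equivalent route is to differentiate $\eta$ directly, reducing $\eta'(r)<0$ to the inequality $\mathcal{E}+3r'^2\mathcal{K}>(2r'^2\mathcal{K}^2/\pi)(4-r^2/2)$, which presents the same core difficulty.) Once $\Theta$ is known to increase, Lemma \ref{lm2.1} gives in turn that $g/r^2$ increases, that $\eta_1'/\eta_2'$ decreases, and finally that $\eta$ is strictly decreasing. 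The proof then concludes by computing the endpoints: the expansion $\mathcal{K}(r)=(\pi/2)(1+r^2/4+9r^4/64+\cdots)$ yields $\pi/[2\mathcal{K}(r)]=1-r^2/4-5r^4/64+\cdots$, whence $\eta(0^+)=-5/64$, while substituting $r=\sqrt{2}/2$ gives $\eta(\sqrt{2}/2^-)=2\pi/\mathcal{K}(\sqrt{2}/2)-7/2$; strict monotonicity then forces the stated range $(-7/2+2\pi/\mathcal{K}(\sqrt{2}/2),-5/64)$.
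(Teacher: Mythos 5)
Your setup and algebra are correct as far as they go: the first application of Lemma \ref{lm2.1}, the identity $\eta_1'(r)/\eta_2'(r)=-g(r)/(8r^2)$, the computed formula $g'(r)/(2r)=(\pi/2)\bigl[2\mathcal{E}(\mathcal{K}-\mathcal{E})-r^2r'^2\mathcal{K}^2\bigr]/(r^4r'^4\mathcal{K}^3)$ (I checked it), and the endpoint limits $\eta(0^+)=-5/64$, $\eta(\sqrt{2}/2^-)=2\pi/\mathcal{K}(\sqrt{2}/2)-7/2$ are all right. But the proposal stops exactly where the real work begins: the monotonicity of $\Theta$ is announced as ``the plan,'' not proved, and the tools you name cannot prove it. Writing the numerator of $\Theta$ as a quadratic in $\mathcal{K}$, namely $-r^2r'^2\mathcal{K}^2+2\mathcal{E}\mathcal{K}-2\mathcal{E}^2$, shows that even its \emph{positivity} requires squeezing $\mathcal{K}$ from both sides relative to $\mathcal{E}$; Lemma \ref{lm2.6} only bounds $\mathcal{K}$ from below, which is the wrong direction for the $-r^2r'^2\mathcal{K}^2$ term, and Lemma \ref{lm2.2} cannot arbitrate the competition between the increasing factor $(\mathcal{K}-\mathcal{E})/r^2$ and the $\mathcal{K}^2$ term. (Also, your claim that $rr'\mathcal{K}$ is increasing on $(0,\sqrt{2}/2)$ does not follow from Lemma \ref{lm2.2} \ref{it2.2.2}; it is true, but needs the separate observation $d(rr'\mathcal{K})/dr=(\mathcal{E}-r^2\mathcal{K})/r'>0$ there.) So there is a genuine gap at the decisive step.

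It is worth seeing how the paper sidesteps your $\Theta$ entirely. Note that $g(r)/r^2=-8\,\eta_{11}(r)/\eta_{22}(r)$ where $\eta_{11}=r^2r'^2\mathcal{K}^2-\pi[\mathcal{E}-r'^2\mathcal{K}]$ and $\eta_{22}=8r^4r'^2\mathcal{K}^2$: the two quotients are the same function, but Lemma \ref{lm2.1} differentiates the chosen numerator--denominator \emph{pair}, not the quotient, so the choice of pair matters. With the paper's pair, the differentiated ratio collapses to $\eta_{11}'/\eta_{22}'=-\zeta_1(r)/\zeta_2(r)$ with $\zeta_1=[\pi-2\mathcal{E}+2r^2\mathcal{K}]/(16r^2)$ and $\zeta_2=\mathcal{E}+(1-2r^2)\mathcal{K}$, and the required monotonicity splits into two separately provable facts: $\zeta_2=\mathcal{E}-\mathcal{K}+2r'^2\mathcal{K}$ is positive and decreasing by Lemma \ref{lm2.2}, while $\zeta_1$ is positive and increasing --- and it is precisely here that the paper imports the ingredient your toolkit lacks, the lower bound $2\mathcal{E}(r)/\pi>1-r^2/4-r^4/8$ from \cite[Corollary 2.7]{JDZ}, which combined with Lemma \ref{lm2.6} gives $\zeta_1'(r)>0$. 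No third application of Lemma \ref{lm2.1} is needed. To salvage your route you would need an auxiliary bound of that kind (a sharp lower bound on $\mathcal{E}$, equivalently an upper bound on $\mathcal{K}$, valid on $(0,\sqrt{2}/2)$); note you cannot invoke Corollary \ref{cl2.8} for this, since it is deduced from the very lemma being proved.
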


\begin{proof}
Let $\eta_1(r)=\pi/[2\mathcal{K}(r)]+r^2/4-1$ and $\eta_2(r)=r^4$, then it is easy to see that
$\eta_1(0)=\eta_2(0)=0, \eta(r)=\eta_1(r)/\eta_2(r)$ and
\begin{equation}\label{eq:2.7}
\frac{\eta'_1(r)}{\eta'_2(r)}=\frac{\eta_{11}(r)}{\eta_{22}(r)},
\end{equation}
where $\eta_{11}(r)=r^2r'^2\mathcal{K}^2(r)-\pi[\mathcal{E}(r)-r'^2\mathcal{K}(r)]$ and $\eta_{22}(r)=8r^4r'^2\mathcal{K}^2(r)$.

Observe that $\eta_{11}(0)=\eta_{22}(0)=0$. Taking the derivative of $\eta_{11}(r)$ and $\eta_{22}(r)$ yields
\begin{equation}\label{eq:2.8}
\frac{\eta'_{11}(r)}{\eta'_{22}(r)}=-\frac{\zeta_1(r)}{\zeta_2(r)},
\end{equation}
where
\begin{equation}\label{eq:2.9}
\zeta_1(r)=\frac{\pi-2\mathcal{E}(r)+2r^2\mathcal{K}(r)}{16r^2},\quad \zeta_2(r)=\mathcal{E}(r)+(1-2r^2)\mathcal{K}(r).
\end{equation}

An easy computation leads to  
\begin{equation}\label{eq:2.10}
\zeta'_1(r)=\frac{\mathcal{E}(r)-\pi r'^2+r'^4\mathcal{K}(r)}{8r^3(1-r^2)}
\end{equation}
for $r\in(0,\sqrt{2}/2)$.

It follows from \cite[Corollary 2.7]{JDZ} that $2\mathcal{E}(r)/\pi>1-r^2/4-r^4/8$ for $r\in(0,\sqrt{2}/2)$. This can be rewritten as
\begin{equation}\label{eq:2.11}
\mathcal{E}(r)-\pi r'^2>\frac{\pi}{2}\left(-1+\frac{7r^2}{4}-\frac{r^4}{8}\right)
\end{equation}for $r\in(0,\sqrt{2}/2)$.

Lemma \ref{lm2.6} and \eqref{eq:2.11} lead to the conclusion that 
\begin{equation*}
\mathcal{E}(r)-\pi r'^2+r'^4\mathcal{K}(r)>\frac{\pi}{2}\left(-1+\frac{7r^2}{4}-\frac{r^4}{8}\right)+\frac{\pi}{2}(1-r^2)^2\left(1+\frac{r^2}{4}\right)=\frac{\pi r^4(3+2r^2)}{16}>0
\end{equation*}for $r\in(0,\sqrt{2}/2)$. This conjunction with \eqref{eq:2.10} implies that $\zeta_1(r)$ is strictly increasing on $(0,\sqrt{2}/2)$. Since $\zeta_2(r)$ can be rewritten as $\mathcal{E}(r)-\mathcal{K}(r)+2r'^2\mathcal{K}(r)$, we conclude easily from Lemma \ref{lm2.2} \ref{it2.2.2} and \ref{it2.2.4} that $\zeta_2(r)$ is strictly decreasing on $(0,\sqrt{2}/2)$. 

Moreover, it follows easily from \eqref{eq:2.9} that $\zeta_1(r)>0$ and $\zeta_2(r)>0$ for $r\in(0,\sqrt{2}/2)$. This conjunction with 
\eqref{eq:2.8} together with the monotonicity of $\zeta_1(r)$ and $\zeta_2(r)$ implies that $\eta'_{11}/(r)\eta'_{22}(r)$ is strictly decreasing on $(0,\sqrt{2}/2)$.

Therefore, Lemma \ref{lm2.4} follows immediately from \eqref{eq:2.7} and Lemma \ref{lm2.1} together with the limiting values 
$\eta(0^+)=-5/64$ and $\eta(\sqrt{2}/2^-)=-7/2+2\pi/\mathcal{K}(\sqrt{2}/2)$.

\end{proof}

Note that $-7/2+2\pi/\mathcal{K}(\sqrt{2}/2)=-0.111148<-7/64$, then the following corollary follows directly from Lemma \ref{lm2.7}.

\begin{corollary}\label{cl2.8}
The double inequality 
\begin{equation*}
1-\frac{r^2}{4}-\frac{7r^4}{64}<\frac{\pi}{2\mathcal{K}(r)}<1-\frac{r^2}{4}-\frac{5r^4}{64}
\end{equation*}holds for $r\in(0,\sqrt{2}/2)$.
\end{corollary}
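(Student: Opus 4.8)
The plan is to obtain Corollary \ref{cl2.8} as a direct transcription of Lemma \ref{lm2.7}. The first step is purely algebraic: solving the defining relation of $\eta$ for the central quantity gives
\[
\frac{\pi}{2\mathcal{K}(r)} = 1 - \frac{r^2}{4} + r^4\,\eta(r),
\]
so that, after dividing through by $r^4>0$, the two inequalities of the corollary are together equivalent to the single two-sided statement $-7/64 < \eta(r) < -5/64$ on $(0,\sqrt2/2)$. In other words, once the range of $\eta$ is known, nothing is required beyond rearrangement, and the entire content is transferred to the location of that range.

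The second step disposes of the upper estimate. Lemma \ref{lm2.7} asserts that $\eta$ is strictly decreasing on $(0,\sqrt2/2)$ with right-hand endpoint value $\eta(0^+)=-5/64$; hence $\eta(r)<-5/64$ throughout the open interval, and multiplying by $r^4$ immediately yields $\pi/[2\mathcal{K}(r)]<1-r^2/4-5r^4/64$. This half uses only the monotonicity and the value at $0^+$, both already recorded in Lemma \ref{lm2.7}, so I expect no difficulty here.

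The third step is the lower estimate, and this is where I expect the real work to lie. Monotonicity gives $\eta(r)>\eta(\sqrt2/2^-)=2\pi/\mathcal{K}(\sqrt2/2)-7/2$ for every $r$ in the interval, so it suffices to decide whether this transcendental infimum is at least $-7/64$; if it is, the chain $\eta(r)>2\pi/\mathcal{K}(\sqrt2/2)-7/2\ge -7/64$ closes the argument. The obstacle is that this is a genuinely sharp numerical comparison: the two constants agree to two decimal places, so no crude estimate of $\mathcal{K}(\sqrt2/2)$ will settle it, and the sharpness of the stated lower bound rests entirely on its outcome. I would therefore fall back on the closed form $\mathcal{K}(\sqrt2/2)=\Gamma^2(1/4)/(4\sqrt\pi)$ quoted in the introduction, together with a sufficiently accurate value of $\Gamma(1/4)$, to evaluate $2\pi/\mathcal{K}(\sqrt2/2)-7/2$ to enough decimal places to pin down its position relative to $-7/64$. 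This single evaluation is the crux of the proof.
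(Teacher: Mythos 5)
Your reduction is exactly the one the paper intends: writing $\pi/[2\mathcal{K}(r)]=1-r^2/4+r^4\eta(r)$, the corollary is equivalent to $-7/64<\eta(r)<-5/64$ on $(0,\sqrt{2}/2)$, and the upper half does follow at once from $\eta$ being strictly decreasing with $\eta(0^+)=-5/64$. The gap is in your third step, and it is not a gap you can close: the ``single evaluation'' you defer actually goes the wrong way. Using $\mathcal{K}(\sqrt{2}/2)=\Gamma^2(1/4)/(4\sqrt{\pi})=1.85407\cdots$ one finds
\[
\eta\bigl(\sqrt{2}/2^-\bigr)=\frac{2\pi}{\mathcal{K}(\sqrt{2}/2)}-\frac{7}{2}=-0.111148\cdots<-\frac{7}{64}=-0.109375,
\]
so the infimum of $\eta$ lies strictly below $-7/64$. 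Since $\eta$ is continuous and strictly decreasing, $\eta(r)<-7/64$ for all $r$ sufficiently close to $\sqrt{2}/2$, i.e.\ the left inequality of the corollary is \emph{false} there: at $r=\sqrt{2}/2$ the claimed lower bound equals $1-1/8-7/256=217/256=0.847656\cdots$, which exceeds $\pi/[2\mathcal{K}(\sqrt{2}/2)]=0.847213\cdots$. No amount of precision in $\Gamma(1/4)$ will rescue the chain $\eta(r)>\eta(\sqrt{2}/2^-)\ge-7/64$, because its second link is simply not true.

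What your (correct) reduction exposes is an error in the paper itself. The paper's entire proof of the corollary is the remark preceding it, which records precisely the numerical fact displayed above, $-7/2+2\pi/\mathcal{K}(\sqrt{2}/2)=-0.111148<-7/64$, and then claims the corollary ``follows directly'' from Lemma \ref{lm2.7}; but that inequality points in exactly the direction that refutes the lower bound near the right endpoint, whereas the conclusion would require $\ge$ in place of $<$. The statement becomes true, and your argument closes verbatim, if $7/64$ is replaced by any constant $c$ with $c\ge 7/2-2\pi/\mathcal{K}(\sqrt{2}/2)=0.111148\cdots$, for instance $c=1/8$: then $\eta(r)>\eta(\sqrt{2}/2^-)>-1/8$ gives $\pi/[2\mathcal{K}(r)]>1-r^2/4-r^4/8$ on all of $(0,\sqrt{2}/2)$, and this weaker bound still suffices for the only place the corollary is used (Case 1 of Theorem \ref{tm3.4}, where the margin $9r^4/64$ merely shrinks to $r^4/8>0$). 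So your instinct that everything hinges on one sharp numerical comparison was right; the honest outcome of that comparison is that the corollary as stated must be weakened, not proved.
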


\begin{lemma}\label{lm2.9}
Let $\delta_4=4\sqrt{2\mathcal{K}^2(\sqrt{2}/2)\left[\pi^2-\mathcal{K}^2(\sqrt{2}/2)\right]}/\pi=1.8389\cdots$, $\lambda\in(0,2]$ and  
\begin{equation*}
\Phi_\lambda(r)=\frac{\sqrt{1+\lambda rr'}-\sqrt{1-\lambda rr'}}{\lambda r},
\end{equation*}
which is defined as in \cite[Lemma 2.8]{JDZ}, then the following statements are true:
\begin{enumerate}[itemindent=-0.5em,label=(\arabic*)]
\item $\Phi_{\delta_4}(r)>1-r^2/4$ holds for $r\in(0,33/50)$;\label{it2.8.1}
\item $\mathcal{K}(r)/\sqrt{r}$ is strictly decreasing on $(0,\sqrt{2}/2)$;\label{it2.8.2}
\item $\sqrt{r}\Phi_{\delta_4}(r)$ is strictly decreasing on $(33/50,\sqrt{2}/2)$.\label{it2.8.3}
\end{enumerate}
\end{lemma}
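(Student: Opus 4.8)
The three assertions are essentially independent, so the plan is to treat them in turn, beginning with the self-contained part \ref{it2.8.2}. Writing $\psi(r)=\mathcal{K}(r)/\sqrt{r}$ and differentiating with the formula $\mathcal{K}'(r)=[\mathcal{E}(r)-r'^2\mathcal{K}(r)]/(rr'^2)$, a one-line computation gives $\psi'(r)=r^{-3/2}r'^{-2}[\mathcal{E}(r)-\tfrac{3}{2}r'^2\mathcal{K}(r)]$, so it suffices to show that $F(r):=\tfrac{3}{2}r'^2\mathcal{K}(r)-\mathcal{E}(r)>0$ on $(0,\sqrt{2}/2)$. I would then differentiate $F$: using the derivative formulas of Section 1 everything collapses to $F'(r)=[\mathcal{E}(r)-(1+3r^2)\mathcal{K}(r)]/(2r)$, which is negative because $\mathcal{E}<\mathcal{K}<(1+3r^2)\mathcal{K}$. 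Hence $F$ is strictly decreasing, and since $F(\sqrt{2}/2^-)=\tfrac{3}{4}\mathcal{K}(\sqrt{2}/2)-\mathcal{E}(\sqrt{2}/2)=[\Gamma^2(1/4)-8\Gamma^2(3/4)]/(16\sqrt{\pi})>0$, we conclude $F>0$, i.e. $\psi'<0$, throughout.

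The common preparation for parts \ref{it2.8.1} and \ref{it2.8.3} is to rationalize $\Phi_\lambda$. Multiplying by the conjugate yields the equivalent closed forms $\Phi_\lambda(r)=2r'/(\sqrt{1+\lambda rr'}+\sqrt{1-\lambda rr'})$ and $\Phi_\lambda^2(r)=2r'^2/[1+W(r)]$, where I set $\lambda=\delta_4$ and $W(r)=\sqrt{1-\delta_4^2r^2r'^2}=\sqrt{1-\delta_4^2r^2(1-r^2)}$ (note $\delta_4 rr'<1$ on the relevant range, so $W$ is real and $0<W<1$). Both remaining claims then become algebraic inequalities in $W$.

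For \ref{it2.8.1}, since $1-r^2/4>0$ and $\Phi_{\delta_4}>0$ on $(0,33/50)$, the target is equivalent to $\Phi_{\delta_4}^2>(1-r^2/4)^2$, i.e. to $[2(1-r^2)/(1-r^2/4)^2]-1>W(r)$. I would first verify that the left-hand side equals $(1-\tfrac{3}{2}r^2-\tfrac{1}{16}r^4)/(1-r^2/4)^2$ and is positive on the interval, which legitimizes squaring; squaring and cancelling the factor $x=r^2$ then reduces everything to a quintic inequality $R(x)>0$ on $(0,(33/50)^2)$ whose coefficients are explicit polynomials in $\delta_4^2$. I would settle $R>0$ by showing $R$ is strictly decreasing there — bounding $R'$ from above by discarding its negative terms and checking that the resulting increasing expression is still negative at $x=(33/50)^2$ — combined with the endpoint value $R((33/50)^2)>0$. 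I expect this part to be the main obstacle: unlike \ref{it2.8.3} below, the quintic $R$ does not factor, and its value at the right endpoint is only \emph{barely} positive, so ruling out a sign change inside the interval requires an accurate value of $\delta_4$ and a careful monotone bound on $R'$.

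For \ref{it2.8.3}, it is cleanest to study $V:=(\sqrt{r}\,\Phi_{\delta_4})^2=r\Phi_{\delta_4}^2$. Using $1-W^2=\delta_4^2r^2(1-r^2)$ gives the compact identity $V=2(1-W)/(\delta_4^2r)$; differentiating with $W'=-\delta_4^2r(1-2r^2)/W$ reduces $V'<0$ to the inequality $r^2W>1-2r^2$. On $(33/50,\sqrt{2}/2)$ we have $r^2<1/2$, so $1-2r^2\ge0$ and both sides are nonnegative; squaring and substituting $W^2$ yields a polynomial inequality that factors neatly as $(1-x)(\delta_4^2x^3-3x+1)<0$ with $x=r^2$. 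As $1-x>0$, it remains to prove $Q(x):=\delta_4^2x^3-3x+1<0$ on $((33/50)^2,1/2)$; here $Q'(x)=3(\delta_4^2x^2-1)<0$ because $\delta_4^2x^2<\delta_4^2/4<1$ (using $\delta_4<2$), so $Q$ is strictly decreasing, and $Q((33/50)^2)<0$ closes the argument. This clean factorization makes \ref{it2.8.3} the easiest of the three, and it simultaneously explains the cut-off $33/50$ as a convenient rational number just above the unique zero of $Q$.
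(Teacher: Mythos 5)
Your proposal is correct, and it splits naturally into a part that mirrors the paper and a part that genuinely improves on it. For part (2) you and the paper give the same argument: your $F(r)=\tfrac{3}{2}r'^2\mathcal{K}(r)-\mathcal{E}(r)$ is exactly $-\tfrac{1}{2}\xi(r)$, where the paper's $\xi(r)=3[\mathcal{E}(r)-r'^2\mathcal{K}(r)]-\mathcal{E}(r)$; the paper proves $\xi$ increasing via Lemma \ref{lm2.2}(1) and the monotonicity of $\mathcal{E}$, you prove $F$ decreasing by direct differentiation (your formula $F'(r)=[\mathcal{E}(r)-(1+3r^2)\mathcal{K}(r)]/(2r)$ checks out), and both finish with the sign of the value at $\sqrt{2}/2$. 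For part (1) the strategy is also the paper's: square (with the positivity check that legitimizes it --- which you state explicitly and the paper actually glosses over), reduce to a polynomial inequality in $x=r^2$, and settle it by monotonicity plus a barely positive value at $x=(33/50)^2$. The difference is cosmetic: the paper's polynomial $\nu$ in \eqref{eq:2.13} has the pleasant derivative $\nu'(r)=-8r[\delta_4^2(4-r^2)^3+16]<0$, while your quintic $R$ needs the extra bounding step you sketch; that step does work (one can check $R''>0$ on the interval, so $R'$ is increasing with $R'((33/50)^2)\approx-1.51<0$, and $R((33/50)^2)\approx 0.013>0$), so your plan is executable, just slightly heavier than the paper's. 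The genuine divergence is part (3): the paper differentiates $\sqrt{r}\,\Phi_{\delta_4}$ directly, proves $\omega_1-\omega_2$ and $1/\omega_1+1/\omega_2$ monotone, and kills the derivative with the numerical estimates $-83/100+73/100$ in \eqref{eq:2.20}; you instead pass to $V=r\Phi_{\delta_4}^2=2(1-W)/(\delta_4^2 r)$, and the condition $V'<0$ factors exactly: using $1-W^2=\delta_4^2r^2r'^2$ one gets $(1-W)\left[(1+W)(1-2r^2)-Wr'^2\right]<0$, i.e.\ $r^2W>1-2r^2$, and squaring yields $(1-x)(\delta_4^2x^3-3x+1)<0$ with $x=r^2$, so everything reduces to the cubic $Q(x)=\delta_4^2x^3-3x+1<0$ on $((33/50)^2,1/2)$, settled by $Q'<0$ and $Q((33/50)^2)\approx-0.027<0$. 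I verified this factorization and the sign computations; they are right. Your route for (3) buys three things the paper's does not: it replaces ad hoc numerical estimates by an exact algebraic identity, it is far less sensitive to the precision of $\delta_4$, and it explains the otherwise mysterious threshold $33/50$ as a convenient rational point just above the zero of $Q$.
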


\begin{proof}
(1) In order to prove that $\Phi_{\delta_4}(r)>1-r^2/4$ for $r\in(0,33/50)$, by squaring both sides of the inequality and simplifying, it suffices to show
\begin{equation}\label{eq:2.12}
\sqrt{1-\delta_4^2r^2(1-r^2)}<1-\frac{\delta_4^2}{2}r^2\left(1-\frac{r^2}{4}\right)^2
\end{equation}holds for $r\in(0,33/50)$.

The difference of both sides squares of \eqref{eq:2.12} leads to
\begin{equation} \label{eq:2.13} 
1-\delta_4^2r^2(1-r^2)-\left[1-\frac{\delta_4^2}{2}r^2\left(1-\frac{r^2}{4}\right)^2\right]^2=-\frac{\delta_4^2r^4}{1024}\nu(r),
\end{equation}
where 
\begin{equation*}
\nu(r)=\delta_4^2r^8-16\delta_4^2r^6+96\delta_4^2r^4-256\delta_4^2r^2-64r^2+256\delta_4^2-512.
\end{equation*}

An easy calculation yields 
\begin{align} \label{eq:2.14} 
\nu(33/50)&=5.9588\cdots,\\ \label{eq:2.15} 
\nu'(r)&=-8r[\delta_4^2(4-r^2)^3+16]<0. 
\end{align}
It follows from \eqref{eq:2.14} and \eqref{eq:2.15} that $\nu(r)>0$ for $r\in(0,33/50)$.  This conjunction with \eqref{eq:2.13} completes the proof of Lemma \ref{lm2.9} \ref{it2.8.1}.

\medskip

(2) It suffices to determine the sign of the derivate of $\mathcal{K}(r)/\sqrt{r}$. An easy computation yields
\begin{equation}\label{eq:2.16} 
\frac{d[\mathcal{K}(r)/\sqrt{r}]}{dr}=\frac{\xi(r)}{2r^{3/2}r'^2},
\end{equation}
where 
\begin{equation*}
\xi(r)=3[\mathcal{E}(r)-r'^2\mathcal{K}(r)]-\mathcal{E}(r).
\end{equation*}
It follows from Lemma \ref{lm2.2} \ref{it2.2.1} and the monotonicity of $\mathcal{E}(r)$ that $\xi(r)$ is strictly increasing on $(0,\sqrt{2}/2)$.
As a consequence, we obtain
\begin{equation}\label{eq:2.17}
\xi(r)<\xi(\sqrt{2}/2)=-0.07982\cdots<0
\end{equation}for $r\in(0,\sqrt{2}/2)$.

Therefore, we conclude from \eqref{eq:2.16} and \eqref{eq:2.17} that $\mathcal{K}(r)/\sqrt{r}$ is strictly decreasing on $(0,\sqrt{2}/2)$.

\medskip

(3) Let $\omega_1(r)=\sqrt{1+\delta_4rr'}$ and $\omega_2(r)=\sqrt{1-\delta_4rr'}$, then we clearly see that $\omega_1(r)>\omega_2(r)>0$ for $r\in(0,\sqrt{2}/2)$ and $\Phi_{\delta_4}(r)=[\omega_1(r)-\omega_2(r)]/(\delta_4 r)$.

Easy computations lead to
\begin{align}\label{eq:2.18} 
\frac{d[\omega_1(r)-\omega_2(r)]}{dr}&=\frac{\delta_4(1-2r^2)}{2\sqrt{1-r^2}}\left[\frac{1}{\omega_1(r)}+\frac{1}{\omega_2(r)}\right]>0,\\ \label{eq:2.19} 
\frac{d[1/\omega_1(r)+1/\omega_2(r)]}{dr}&=\frac{\delta_4(1-2r^2)}{2\sqrt{1-r^2}}\left[\frac{1}{\omega_2^3(r)}-\frac{1}{\omega_1^3(r)}\right]>0
\end{align}
for $r\in(0,\sqrt{2}/2)$.  It follows from \eqref{eq:2.18} and \eqref{eq:2.19} that $\omega_1(r)-\omega_2(r)$ and $1/\omega_1(r)+1/\omega_2(r)$ are strictly increasing on $r\in(0,\sqrt{2}/2)$. Moreover,  the monotonicity property of composite function leads to the conculsion that $(1-2r^2)/\sqrt{1-r^2}=2\sqrt{1-r^2}-1/\sqrt{1-r^2}$ is strictly decreasing on $(0,\sqrt{2}/2)$.  These properties imply that 
\begin{align}\nonumber
\frac{d[\sqrt{r}\Phi_{\delta_4}(r)]}{dr}&=\frac{1}{2\sqrt{r}}\left[-\frac{\omega_1(r)-\omega_2(r)}{\delta_4 r}+\frac{1-2r^2}{\sqrt{1-r^2}}\left(\frac{1}{\omega_1(r)}+\frac{1}{\omega_2(r)}\right)\right]\\ \nonumber
&<\frac{1}{2\sqrt{r}}\left[-\frac{\sqrt{2}[\omega_1(33/50)-\omega_2(33/50)]}{\delta_4} +\frac{1-2\times(33/50)^2}{\sqrt{1-(33/50)^2}}\left(\frac{1}{\omega_1(\sqrt{2}/2)}+\frac{1}{\omega_2(\sqrt{2}/2)}\right)\right]\\ \nonumber
&<\frac{1}{2\sqrt{r}}\left(-\frac{83}{100}+\frac{73}{100}\right)\\ \label{eq:2.20}
&=-\frac{1}{20\sqrt{r}}<0
\end{align}for $r\in(33/50,\sqrt{2}/2)$.

Therefore, Lemma \ref{lm2.9} \ref{it2.8.3} follows directly from \eqref{eq:2.20}.
\end{proof}

\bigskip

\section{Main results}

\begin{theorem}\label{tm3.1}
The double inequality
\begin{equation*}
\alpha_1Q(a,b)+(1-\alpha_1)C(a,b)<AG_{Q,C}(a,b)<\beta_1Q(a,b)+(1-\beta_1)C(a,b)
\end{equation*}
holds for all $a,b>0$ with $a\neq b$ if and only if $\alpha_1\geq\delta_1=0.5216\cdots$ and $\beta_1\leq1/2$.
\end{theorem}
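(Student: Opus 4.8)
The plan is to reduce both halves of the double inequality to a single one-variable comparison governed by the function $f(r)$ of Lemma~\ref{lm2.3}, exploiting the homogeneity of all the means involved. First I would record that $C(a,b)\ge Q(a,b)$ for all $a,b>0$, with equality only when $a=b$; this is immediate from $2(a^2+b^2)\ge(a+b)^2$. Hence for $a\ne b$ we may regard $AG_{Q,C}(a,b)=AG[Q(a,b),C(a,b)]$ as an $AG$-mean of two arguments in increasing order, and by the homogeneity of $AG$ together with the Gauss identity \eqref{eq:1.2} in its normalized form $AG(1,x)=\pi/[2\mathcal{K}(\sqrt{1-x^2})]$, I obtain
\begin{equation*}
\frac{AG_{Q,C}(a,b)}{C(a,b)}=AG\!\left(1,\frac{Q(a,b)}{C(a,b)}\right)=\frac{\pi}{2\mathcal{K}(r)},\qquad r=\sqrt{1-\left(\frac{Q(a,b)}{C(a,b)}\right)^2}.
\end{equation*}

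Next I would pin down the substitution. Writing $s=a/b$, one computes $Q/C=(s+1)/\sqrt{2(s^2+1)}$, which ranges over $(1/\sqrt{2},1)$ as $s$ runs over $(0,\infty)\setminus\{1\}$; correspondingly the associated variable $r=\sqrt{1-(Q/C)^2}$ covers the full interval $(0,\sqrt{2}/2)$ appearing in Lemma~\ref{lm2.3}, and one has the clean identity $Q/C=\sqrt{1-r^2}=r'$. Dividing the target inequality through by $C>0$ and substituting $Q/C=r'$ converts
\begin{equation*}
\alpha_1Q(a,b)+(1-\alpha_1)C(a,b)<AG_{Q,C}(a,b)<\beta_1Q(a,b)+(1-\beta_1)C(a,b)
\end{equation*}
into the equivalent chain
\begin{equation*}
1-\alpha_1(1-r')<\frac{\pi}{2\mathcal{K}(r)}<1-\beta_1(1-r').
\end{equation*}
Since $1-r'>0$ on $(0,\sqrt{2}/2)$, rearranging each half and dividing by $1-r'$ shows that the left inequality is equivalent to $\alpha_1>f(r)$ and the right inequality to $\beta_1<f(r)$, where $f(r)=[1-\pi/(2\mathcal{K}(r))]/(1-r')$ is precisely the function of Lemma~\ref{lm2.3}.

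Finally I would invoke Lemma~\ref{lm2.3}, which asserts that $f$ increases strictly from $(0,\sqrt{2}/2)$ onto $(1/2,\delta_1)$. The requirement that $\alpha_1>f(r)$ hold for every $r\in(0,\sqrt{2}/2)$ is then equivalent to $\alpha_1\ge\sup_r f(r)=\delta_1$: sufficiency because $f(r)<\delta_1$ strictly throughout, and necessity because $f(r)\to\delta_1$ as $r\to(\sqrt{2}/2)^-$, so any $\alpha_1<\delta_1$ is violated for $r$ near the endpoint. Symmetrically, $\beta_1<f(r)$ for all $r$ is equivalent to $\beta_1\le\inf_r f(r)=1/2$. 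This yields exactly the sharp constants $\alpha_1\ge\delta_1$ and $\beta_1\le1/2$. I expect no serious obstacle in this argument: all of the genuine analytic difficulty — the monotonicity of $f$ and the evaluation of its limiting values $1/2$ and $\delta_1$ — has already been absorbed into Lemma~\ref{lm2.3}. The only points demanding care are the bookkeeping of the substitution $r=\sqrt{1-(Q/C)^2}$ (in particular verifying that $r$ sweeps out all of $(0,\sqrt{2}/2)$) and the matching of the open endpoint behaviour to the strict-versus-nonstrict nature of the extremal constants.
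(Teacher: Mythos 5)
Your proposal is correct and takes essentially the same route as the paper: the identical substitution (in which $Q/C=r'$ with $r\in(0,\sqrt{2}/2)$) reduces both halves of the inequality to comparing $\alpha_1,\beta_1$ with the function $f(r)$ of Lemma~\ref{lm2.3}, whose strict monotonicity from $(0,\sqrt{2}/2)$ onto $(1/2,\delta_1)$ then yields the sharp constants. The only difference is presentational — the paper packages the reduction as the single identity $\bigl(C(a,b)-AG_{Q,C}(a,b)\bigr)/\bigl(C(a,b)-Q(a,b)\bigr)=f(r)$, whereas you divide through by $C(a,b)$ and rearrange, which is the same computation; your extra bookkeeping (range of $r$, endpoint behaviour versus strict/non-strict constants) just makes explicit what the paper leaves implicit.
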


\begin{proof}
Since $Q(a,b),C(a,b)$ and $AG(a,b)$ are symmetric and homogeneous of degree 1, without loss of generality, we may assume that
$a>b>0$. Let $r=(a-b)/\sqrt{2(a^2+b^2)}\in(0,\sqrt{2}/2)$, then we clearly see from \eqref{eq:1.2} and \eqref{eq:1.3} together with the definition of $Q(a,b)$ and $C(a,b)$ that 
\begin{align}\label{eq:3.1}
Q(a,b)&=\sqrt{1-r^2}C(a,b),\\ \label{eq:3.2}
AG_{Q,C}(a,b)&=\frac{\pi C(a,b)}{2\mathcal{K}(r).}
\end{align}

It follows from \eqref{eq:3.1} and \eqref{eq:3.2} that
\begin{equation}\label{eq:3.3}
\frac{C(a,b)-AG_{Q,C}(a,b)}{C(a,b)-Q(a,b)}=\frac{1-\pi/[2\mathcal{K}(r)]}{1-\sqrt{1-r^2}}=f(r),
\end{equation}
where $f(r)$ is defined as in Lemma \ref{lm2.3}.

Therefore, Theorem \ref{tm3.1} follows easily from \eqref{eq:3.3} and Lemma \ref{lm2.3} .

\end{proof}

\begin{theorem}\label{tm3.2}
The double inequality
\begin{equation*}
Q^{\alpha_2}(a,b)C^{1-\alpha_2}(a,b)<AG_{Q,C}(a,b)<Q^{\beta_2}(a,b)C^{1-\beta_2}(a,b)
\end{equation*}
holds for all $a,b>0$ with $a\neq b$ if and only if $\alpha_2\geq1/2$ and $\beta_2\leq\delta_2=0.4784\cdots$.
\end{theorem}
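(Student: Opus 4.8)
The plan is to mirror the structure of the proof of Theorem \ref{tm3.1}, reducing the geometric-combination inequality to the monotonicity of the function $g(r)$ from Lemma \ref{lm2.4}. First I would invoke the symmetry and homogeneity of $Q(a,b)$, $C(a,b)$, and $AG(a,b)$ to assume without loss of generality that $a>b>0$, and set $r=(a-b)/\sqrt{2(a^2+b^2)}\in(0,\sqrt{2}/2)$. This lets me reuse verbatim the two identities $Q(a,b)=\sqrt{1-r^2}\,C(a,b)$ and $AG_{Q,C}(a,b)=\pi C(a,b)/[2\mathcal{K}(r)]$ already established in \eqref{eq:3.1} and \eqref{eq:3.2}.

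The key step is to observe that the boundary relation $Q^{t}C^{1-t}=AG_{Q,C}$ is, after taking logarithms and solving for $t$, equivalent to
\begin{equation*}
t=\frac{\log AG_{Q,C}(a,b)-\log C(a,b)}{\log Q(a,b)-\log C(a,b)}=\frac{\log(\pi/2)-\log\mathcal{K}(r)}{\log\sqrt{1-r^2}}=g(r),
\end{equation*}
where the middle equality follows directly from the two identities above and $g$ is precisely the function of Lemma \ref{lm2.4}. Consequently, the left inequality $Q^{\alpha_2}C^{1-\alpha_2}<AG_{Q,C}$ holds for all admissible $r$ exactly when $\alpha_2\geq\sup_r g(r)$, while the right inequality $AG_{Q,C}<Q^{\beta_2}C^{1-\beta_2}$ holds for all admissible $r$ exactly when $\beta_2\leq\inf_r g(r)$.

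Finally, I would read off the two extrema from Lemma \ref{lm2.4}: since $g$ is strictly decreasing from $(0,\sqrt{2}/2)$ onto $(\delta_2,1/2)$, its supremum is the left limit $g(0^+)=1/2$ and its infimum is the right limit $g(\sqrt{2}/2^-)=\delta_2$. This yields the sharp parameters $\alpha_2\geq1/2$ and $\beta_2\leq\delta_2$, and the strict monotonicity of $g$ guarantees that these thresholds cannot be improved, establishing optimality.

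The only point requiring care — rather than a genuine obstacle — is the sign of the denominator $\log\sqrt{1-r^2}$, which is strictly negative on $(0,\sqrt{2}/2)$. When passing from the logarithmic inequality $t\log\sqrt{1-r^2}\lessgtr\log(\pi/[2\mathcal{K}(r)])$ to the threshold comparison $t\gtrless g(r)$, dividing by this negative quantity reverses the inequality; this is exactly what pairs the lower parameter $\alpha_2$ with the \emph{supremum} of $g$ and the upper parameter $\beta_2$ with its \emph{infimum}. All the substantive analytic work has already been discharged in Lemma \ref{lm2.4} through the monotone l'Hôpital criterion of Lemma \ref{lm2.1}, so the present argument reduces to this bookkeeping.
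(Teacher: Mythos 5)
Your proposal is correct and follows essentially the same route as the paper: both reduce the inequality, via the substitution $r=(a-b)/\sqrt{2(a^2+b^2)}$ and the identities \eqref{eq:3.1}--\eqref{eq:3.2}, to the ratio $[\log AG_{Q,C}(a,b)-\log C(a,b)]/[\log Q(a,b)-\log C(a,b)]=g(r)$ (the paper writes the same quantity with both signs flipped) and then invoke the monotonicity and limiting values of $g$ from Lemma \ref{lm2.4}. Your explicit handling of the negative denominator and the pairing of $\alpha_2$ with $\sup g$ and $\beta_2$ with $\inf g$ is exactly the bookkeeping the paper leaves implicit in ``follows directly.''
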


\begin{proof}
Without loss of generality, we assume that $a>b>0$. Let $r=(a-b)/\sqrt{2(a^2+b^2)}\in(0,\sqrt{2}/2)$, then from \eqref{eq:3.1} and \eqref{eq:3.2} we clearly see that
\begin{equation}\label{eq:3.4}
\frac{\log C(a,b)-\log AG_{Q,C}(a,b)}{\log C(a,b)-\log Q(a,b)}=g(r),
\end{equation}
where $g(r)$ is defined as in Lemma \ref{lm2.4}.

Therefore, Theorem \ref{tm3.2} follows directly from \eqref{eq:3.4} and Lemma \ref{lm2.4}.

\end{proof}

\begin{theorem}\label{tm3.3}
The double inequality
\begin{equation*}
\frac{Q(a,b)C(a,b)}{\alpha_3Q(a,b)+(1-\alpha_3)C(a,b)}<AG_{Q,C}(a,b)<\frac{Q(a,b)C(a,b)}{\beta_3Q(a,b)+(1-\beta_3)C(a,b)}
\end{equation*}
holds for all $a,b>0$ with $a\neq b$ if and only if $\alpha_3\leq1/2$ and $\beta_3\geq\delta_3=0.5646\cdots$.
\end{theorem}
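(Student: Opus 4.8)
The plan is to follow the template already used for Theorems \ref{tm3.1} and \ref{tm3.2}, reducing the double inequality to the monotonicity of the function $h(r)$ of Lemma \ref{lm2.5}. Since $Q$, $C$ and $AG_{Q,C}$ are symmetric and homogeneous of degree one, I would assume without loss of generality that $a>b>0$ and set $r=(a-b)/\sqrt{2(a^2+b^2)}\in(0,\sqrt{2}/2)$, so that the reductions \eqref{eq:3.1} and \eqref{eq:3.2} are available.

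The key observation is that the middle expression is a weighted harmonic combination of $Q$ and $C$: writing $AG_{Q,C}=QC/[\lambda Q+(1-\lambda)C]$ is equivalent to $1/AG_{Q,C}=(1-\lambda)/Q+\lambda/C$, so the exact weight that reproduces $AG_{Q,C}$ is
\[
\lambda(r)=\frac{1/AG_{Q,C}(a,b)-1/Q(a,b)}{1/C(a,b)-1/Q(a,b)}.
\]
Substituting \eqref{eq:3.1} and \eqref{eq:3.2}, cancelling the common factor $1/C(a,b)$ from numerator and denominator, and then multiplying both by $\sqrt{1-r^2}$, I expect this ratio to collapse exactly to
\[
\lambda(r)=\frac{2\sqrt{1-r^2}\,\mathcal{K}(r)/\pi-1}{\sqrt{1-r^2}-1}=h(r),
\]
the function of Lemma \ref{lm2.5}. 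This identity is the heart of the argument.

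With the identity in hand, I would record that the map $\lambda\mapsto QC/[\lambda Q+(1-\lambda)C]$ is strictly increasing on $[0,1]$, because $Q(a,b)<C(a,b)$ makes the denominator $C-\lambda(C-Q)$ strictly decreasing in $\lambda$. Consequently the left inequality $QC/[\alpha_3Q+(1-\alpha_3)C]<AG_{Q,C}$ is equivalent to $\alpha_3<h(r)$, and the right inequality $AG_{Q,C}<QC/[\beta_3Q+(1-\beta_3)C]$ is equivalent to $\beta_3>h(r)$. The subtlety here — rather than a genuine obstacle — is precisely this sign bookkeeping: working through reciprocals the factor $1/C-1/Q$ is negative, so one must be careful not to let a direction silently flip while passing between the harmonic form and the comparison with $h(r)$.

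Finally, Lemma \ref{lm2.5} supplies everything needed to conclude: $h$ is strictly increasing from $(0,\sqrt{2}/2)$ onto $(1/2,\delta_3)$, so $\inf_r h(r)=1/2$ and $\sup_r h(r)=\delta_3$, with neither value attained. Requiring $\alpha_3<h(r)$ for every admissible $r$ forces $\alpha_3\le1/2$, and requiring $\beta_3>h(r)$ for every $r$ forces $\beta_3\ge\delta_3$; these conditions are also sufficient by the same equivalences. The boundary limits $h(0^+)=1/2$ and $h(\sqrt{2}/2^-)=\delta_3$ together with the strict monotonicity of $h$ simultaneously establish the ``best possible'' (necessity) part of the statement, so no separate extremal computation is required. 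All the analytic difficulty has already been absorbed into Lemma \ref{lm2.5}, and the remaining work is the algebraic reduction to $h(r)$ and the careful tracking of inequality directions.
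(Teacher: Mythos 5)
Your proposal is correct and follows essentially the same route as the paper: reduce via \eqref{eq:3.1} and \eqref{eq:3.2} to the identity $\bigl[1/AG_{Q,C}(a,b)-1/Q(a,b)\bigr]/\bigl[1/C(a,b)-1/Q(a,b)\bigr]=h(r)$ and invoke the monotonicity and range of $h$ from Lemma \ref{lm2.5}. Your explicit sign bookkeeping (the factor $1/C-1/Q<0$) and the equivalence statements merely spell out what the paper compresses into \eqref{eq:3.5} and \eqref{eq:3.6}, so there is no substantive difference.
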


\begin{proof}In order to prove the double inequality in Theorem \ref{tm3.3}, it suffices to find $\alpha_3$ and $\beta_3$ such that 
\begin{equation} \label{eq:3.5}
\alpha_3<\frac{\frac{1}{AG_{Q,C}(a,b)}- \frac{1}{Q(a,b)}}{\frac{1}{C(a,b)}-\frac{1}{Q(a,b)}}<\beta_3
\end{equation}
holds for all $a,b>0$ with $a\neq b$.
 
Without loss of generality, we assume that $a>b>0$. Let $r=(a-b)/\sqrt{2(a^2+b^2)}\in(0,\sqrt{2}/2)$, then  \eqref{eq:3.1} and \eqref{eq:3.2} lead to 
\begin{equation}\label{eq:3.6}
\frac{\frac{1}{AG_{Q,C}(a,b)}- \frac{1}{Q(a,b)}}{\frac{1}{C(a,b)}-\frac{1}{Q(a,b)}}=h(r),
\end{equation}
where $h(r)$ is defined as in Lemma \ref{lm2.5}.

Therefore, Theorem \ref{tm3.3} follows directly from \eqref{eq:3.5}, \eqref{eq:3.6} and Lemma \ref{lm2.5}.
\end{proof}

\begin{theorem}\label{tm3.4}
Let $\alpha_4,\beta_4\in(1/2,1)$, then the double inequality
\begin{align*}
C\left(\sqrt{\alpha_4a^2+(1-\alpha_4)b^2},\sqrt{(1-\alpha_4)a^2+\alpha_4b^2}\right)&<AG_{Q,C}(a,b)\\
&\hspace{1cm}<C\left(\sqrt{\beta_4a^2+(1-\beta_4)b^2},\sqrt{(1-\beta_4)a^2+\beta_4b^2}\right)
\end{align*}
holds for all $a,b>0$ with $a\neq b$ if and only if $\alpha_4\leq (\sqrt{2}+2)/4$ and $\beta_4\geq(\delta_4+2)/4=0.9597\cdots$.
\end{theorem}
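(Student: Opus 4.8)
The plan is to collapse the entire statement onto a single comparison between the one‑parameter family $\Phi_\lambda(r)$ of Lemma~\ref{lm2.9} and $\pi/[2\mathcal{K}(r)]$. As in the previous theorems I assume $a>b>0$ and put $r=(a-b)/\sqrt{2(a^2+b^2)}\in(0,\sqrt{2}/2)$, so that \eqref{eq:3.1} and \eqref{eq:3.2} hold. Writing $\zeta(x)=C(\sqrt{xa^2+(1-x)b^2},\sqrt{(1-x)a^2+xb^2})$, the crucial step is the identity $\zeta(x)=C(a,b)\,\Phi_{4x-2}(r)$. This is a direct computation: the two arguments of $C$ have squares summing to $a^2+b^2$ and product $\tfrac12(a^2+b^2)\sqrt{1-(4x-2)^2r^2r'^2}$ (using $rr'=(a^2-b^2)/[2(a^2+b^2)]$), so $\zeta(x)/C(a,b)=\sqrt2\,r'/\sqrt{1+\sqrt{1-(4x-2)^2r^2r'^2}}$, which reduces to $\Phi_{4x-2}(r)$ after invoking $\Phi_\lambda(r)^2=2[1-\sqrt{1-\lambda^2r^2r'^2}]/(\lambda^2r^2)$. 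Since $\zeta$ is strictly increasing in $x$ on $[1/2,1]$, the map $\lambda\mapsto\Phi_\lambda(r)$ is strictly increasing on $[0,2]$ for each fixed $r$. Combining this with \eqref{eq:3.2}, Theorem~\ref{tm3.4} becomes equivalent to deciding for which $\lambda$ the inequality $\Phi_\lambda(r)<\pi/[2\mathcal{K}(r)]$ (respectively $>$) holds on all of $(0,\sqrt2/2)$, with $\lambda=4\alpha_4-2$ and $\lambda=4\beta_4-2$.

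By the monotonicity in $\lambda$, the lower inequality for all admissible $\lambda$ reduces to the single threshold value $\lambda=\sqrt2$ (i.e.\ $\alpha_4=(\sqrt2+2)/4$), and the upper inequality to $\lambda=\delta_4$ (i.e.\ $\beta_4=(\delta_4+2)/4$). For the upper estimate $\Phi_{\delta_4}(r)>\pi/[2\mathcal{K}(r)]$ I would split $(0,\sqrt2/2)$ at $r=33/50$. On $(0,33/50)$ I combine Lemma~\ref{lm2.9}\ref{it2.8.1}, which gives $\Phi_{\delta_4}(r)>1-r^2/4$, with the upper bound of Corollary~\ref{cl2.8}, namely $\pi/[2\mathcal{K}(r)]<1-r^2/4-5r^4/64<1-r^2/4$. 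On $[33/50,\sqrt2/2)$ I use a $\sqrt r$–squeeze: by Lemma~\ref{lm2.9}\ref{it2.8.2} the quantity $\tfrac{\pi}{2}\sqrt r/\mathcal{K}(r)$ is strictly increasing, while by Lemma~\ref{lm2.9}\ref{it2.8.3} the quantity $\sqrt r\,\Phi_{\delta_4}(r)$ is strictly decreasing, so both are trapped by their common limit at $r=\sqrt2/2$; as $\delta_4$ is defined precisely so that $\Phi_{\delta_4}(\sqrt2/2)=\pi/[2\mathcal{K}(\sqrt2/2)]$, this forces $\sqrt r\,\Phi_{\delta_4}(r)>\tfrac{\pi}{2}\sqrt r/\mathcal{K}(r)$ on the subinterval, i.e.\ $\Phi_{\delta_4}(r)>\pi/[2\mathcal{K}(r)]$.

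For the lower estimate $\Phi_{\sqrt2}(r)<\pi/[2\mathcal{K}(r)]$ I would exploit the closed form $\Phi_{\sqrt2}(r)^2=[1-\sqrt{r^4+r'^4}]/r^2$ together with the sharp lower bound for $\pi/[2\mathcal{K}(r)]$ extracted from Lemma~\ref{lm2.7}, namely $\pi/[2\mathcal{K}(r)]>1-r^2/4+(2\pi/\mathcal{K}(\sqrt2/2)-7/2)r^4$; it then suffices to verify the purely algebraic inequality $\Phi_{\sqrt2}(r)<1-r^2/4+(2\pi/\mathcal{K}(\sqrt2/2)-7/2)r^4$, which after two squarings reduces to the sign of a polynomial in $r^2$ on $(0,1/2)$. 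Sharpness follows from endpoint asymptotics: as $r\to0^+$ one has $\Phi_\lambda(r)=1+(\lambda^2/8-1/2)r^2+O(r^4)$ against $\pi/[2\mathcal{K}(r)]=1-r^2/4+O(r^4)$, so the $r^2$–coefficients coincide exactly at $\lambda=\sqrt2$ and the lower bound fails for every $\lambda>\sqrt2$; as $r\to(\sqrt2/2)^-$ the equality $\Phi_{\delta_4}(\sqrt2/2)=\pi/[2\mathcal{K}(\sqrt2/2)]$ together with strict monotonicity in $\lambda$ shows the upper bound fails for every $\lambda<\delta_4$.

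The main obstacle is the behaviour at the two endpoints, where each threshold inequality degenerates. At $r=0$ the curves $\Phi_{\sqrt2}(r)$ and $\pi/[2\mathcal{K}(r)]$ are tangent to second order, so the lower estimate is decided only at the fourth‑order term; a crude polynomial minorant for $\pi/[2\mathcal{K}(r)]$ will not suffice and one must use the sharp fourth‑order constant $2\pi/\mathcal{K}(\sqrt2/2)-7/2$ furnished by Lemma~\ref{lm2.7} (the rounded value $7/64$ is already too large near $r=\sqrt2/2$). At $r=\sqrt2/2$ the curves $\Phi_{\delta_4}(r)$ and $\pi/[2\mathcal{K}(r)]$ actually coincide, so the strict upper estimate cannot be obtained by any single elementary majorization and genuinely requires the opposing monotonicities of Lemma~\ref{lm2.9}\ref{it2.8.2}--\ref{it2.8.3}; verifying the one remaining hand‑crafted polynomial inequality for $\Phi_{\sqrt2}$ is then routine.
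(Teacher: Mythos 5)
Your proposal is, for three of its four components, the paper's own proof. The identity $\zeta(x)=C(a,b)\Phi_{4x-2}(r)$ is the paper's \eqref{eq:3.7} (you verify it in more detail); the reduction by parameter-monotonicity to the two thresholds $\lambda=\sqrt{2}$ and $\lambda=\delta_4$ is exactly how the paper organizes its three cases; your treatment of the upper threshold coincides with the paper's Case 2 (split at $r=33/50$, Lemma~\ref{lm2.9}~\ref{it2.8.1} with the right half of Corollary~\ref{cl2.8} on the first piece, and on the second piece your ``$\sqrt{r}$-squeeze'' via Lemma~\ref{lm2.9}~\ref{it2.8.2}--\ref{it2.8.3} is just the paper's observation that $\mathcal{K}(r)\Phi_{\delta_4}(r)$, a product of two positive decreasing factors, is decreasing and hence exceeds its limit $\pi/2$); and your sharpness argument (Taylor expansion at $r=0$, parameter-monotonicity at $r=\sqrt{2}/2$) is the paper's Case 3, with matching coefficients.

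Where you genuinely deviate is the lower threshold $\Phi_{\sqrt{2}}(r)<\pi/[2\mathcal{K}(r)]$, and there your instinct is sounder than the paper's. The paper's Case 1 combines the bound $\Phi_{\sqrt{2}}(r)<1-r^2/4-r^4/4$ cited from \cite{JDZ} with the left half of Corollary~\ref{cl2.8}, namely $\pi/[2\mathcal{K}(r)]>1-r^2/4-7r^4/64$. That left half is false near $r=\sqrt{2}/2$: by Lemma~\ref{lm2.7}, $\eta(r)$ decreases to $-7/2+2\pi/\mathcal{K}(\sqrt{2}/2)=-0.1111\cdots$, which is \emph{below} $-7/64=-0.1093\cdots$, so $\eta(r)<-7/64$ for $r$ close to $\sqrt{2}/2$ (numerically, $\pi/[2\mathcal{K}(\sqrt{2}/2)]=0.8472\cdots<0.8476\cdots=1-r^2/4-7r^4/64$ at $r=\sqrt{2}/2$); the paper's own remark preceding the corollary, read in the correct direction, refutes rather than proves it. Your parenthetical that ``$7/64$ is already too large near $r=\sqrt{2}/2$'' pinpoints this bug, and replacing it by the sharp minorant $1-r^2/4+\left(2\pi/\mathcal{K}(\sqrt{2}/2)-7/2\right)r^4$ furnished by Lemma~\ref{lm2.7} is the right repair. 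Two caveats. First, ``one must use the sharp constant'' is an overstatement: any minorant coefficient of magnitude at least $0.1112$ works, and the cheapest correct proof keeps the citation from \cite{JDZ} and simply chains $\Phi_{\sqrt{2}}(r)<1-r^2/4-r^4/4\leq 1-r^2/4+\left(2\pi/\mathcal{K}(\sqrt{2}/2)-7/2\right)r^4<\pi/[2\mathcal{K}(r)]$, with no new algebra. Second, since you instead discard \cite{JDZ} and rely on a direct verification that $\Phi_{\sqrt{2}}(r)$ lies below that sharp quartic, the ``routine'' two-squarings step is a sign determination for a polynomial of degree nine in $r^2$ on $(0,1/2)$; the inequality is true (it is weaker than the bound in \cite{JDZ}), but you have not carried the check out, and it is the one piece of real work your proposal leaves open.
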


\begin{proof}
Since $AG(a,b)$ and $C(a,b)$ are symmetric and homogeneous of degree one, we assume that $a>b>0$. Let $r=(a-b)/\sqrt{2(a^2+b^2)}\in(0,\sqrt{2}/2)$, then \eqref{eq:3.2} and the definition of $C(a,b)$ lead to 
\begin{align}\nonumber
&AG_{Q,C}(a,b)-C\left(\sqrt{pa^2+(1-p)b^2},\sqrt{pb^2+(1-p)a^2}\right)\\ \nonumber
&=C(a,b)\left[\frac{\pi}{2\mathcal{K}(r)}-\frac{\sqrt{1+(4p-2)rr'}-\sqrt{1-(4p-2)rr'}}{(4p-2)r}\right]\\ \label{eq:3.7}
&=C(a,b)\left[\frac{\pi}{2\mathcal{K}(r)}-\Phi_{4p-2}(r)\right]
\end{align}
where $\Phi_\lambda(r)$ is defined as in Lemma \ref{lm2.9}. 

It is easy to be verified that $C\left(\sqrt{pa^2+(1-p)b^2},\sqrt{pb^2+(1-p)a^2}\right)$ is continuous and strictly increasing on $[1/2,1]$ with respect to $p$ for fixed $a,b>0$ with $a\neq b$. 

\medskip

We divide the proof into three cases.

\medskip

{\it Case 1.}  $p_1=(\sqrt{2}+2)/4$.  We clearly see from \cite[Lemma 2.8 (1)]{JDZ} that 
\begin{equation}\label{eq:3.8}
\Phi_{\sqrt{2}}(r)<1-\frac{r^2}{4}-\frac{r^4}{4}
\end{equation}for $r\in(0,\sqrt{2}/2)$.

It follows from Corollary \ref{cl2.8} and \eqref{eq:3.8} that 
\begin{align}\nonumber
\frac{\pi}{2\mathcal{K}(r)}-\Phi_{4p_1-2}(r)&=\frac{\pi}{2\mathcal{K}(r)}-\Phi_{\sqrt{2}}(r)\\ \label{eq:3.9}
&>1-\frac{r^4}{4}-\frac{7r^4}{64}-\left(1-\frac{r^4}{4}-\frac{r^4}{4}\right)=\frac{9r^4}{64}>0
\end{align}for $r\in(0,\sqrt{2}/2)$. 

Therefore,  $AG_{Q,C}(a,b)>C\left(\sqrt{p_1a^2+(1-p_1)b^2},\sqrt{(1-p_1)a^2+p_1b^2}\right)$
follows from \eqref{eq:3.7} and \eqref{eq:3.9}. 

\medskip

{\it Case 2.}  $p_2=(\delta_4+2)/4$. Then from Corollary \ref{cl2.8} and Lemma \ref{lm2.9} \ref{it2.8.1} we clearly see that 
\begin{align}\nonumber
\frac{\pi}{2\mathcal{K}(r)}-\Phi_{4p_2-2}(r)&=\frac{\pi}{2\mathcal{K}(r)}-\Phi_{\delta_4}(r)\\ \label{eq:3.10}
&<1-\frac{r^4}{4}-\frac{3r^4}{64}-\left(1-\frac{r^4}{4}\right)=-\frac{3r^4}{64}<0
\end{align}for $r\in(0,33/50)$. 

Furthermore, it follows from Lemma \ref{lm2.9} \ref{it2.8.2} and \ref{it2.8.3} that 
$\mathcal{K}(r)\Phi_{\delta_4}(r)=\left[\mathcal{K}(r)/\sqrt{r}\right]\cdot\left[\sqrt{r}\Phi_{\delta_4}(r)\right]$
is strictly decreasing on $(33/50,\sqrt{2}/2)$.  As a consequence, 
\begin{equation}\label{eq:3.11}
\mathcal{K}(r)\Phi_{\delta_4}(r)>\mathcal{K}(\sqrt{2}/2)\Phi_{\delta_4}(\sqrt{2}/2)=\frac{\pi}{2}
\end{equation}
for $r\in(0,33/50)$.

It follows from \eqref{eq:3.11} that
\begin{equation}\label{eq:3.12}
\frac{\pi}{2\mathcal{K}(r)}-\Phi_{4p_2-2}(r)=\frac{\pi/2-\mathcal{K}(r)\Phi_{\delta_4}(r)}{\mathcal{K}(r)}<0
\end{equation}for $r\in(33/50,\sqrt{2}/2)$. 

Therefore,  $AG_{Q,C}(a,b)<C\left(\sqrt{p_2a^2+(1-p_2)b^2},\sqrt{(1-p_2)a^2+p_2b^2}\right)$
follows from \eqref{eq:3.7}, \eqref{eq:3.10} and \eqref{eq:3.12}. 

\medskip

{\it Case 3.}  $(\sqrt{2}+2)/4<p_3<(\delta_4+2)/4$.  On the one hand, if $r\rightarrow0$, then making use of Taylor series yields
\begin{equation}\label{eq3.13}
\frac{\pi}{2\mathcal{K}(r)}-\Phi_{4p_3-2}(r)=-2\left[\left(p_3-\frac{2-\sqrt{2}}{4}\right)\left(p_3-\frac{\sqrt{2}+2}{4}\right)\right]r^2+o(r^4).
\end{equation}
Equations \eqref{eq:3.7} and \eqref{eq3.13} lead to the conclusion that there exists small enough $\tau_1\in(0,\sqrt{2}/2)$ such that 
$AG_{Q,C}(a,b)<C\left(\sqrt{p_3a^2+(1-p_3)b^2},\sqrt{(1-p_3)a^2+p_3b^2}\right)$ for all $a>b>0$ with $(a-b)/\sqrt{2(a^2+b^2)}\in(0,\tau_1)$.

On the other hand,  it follows from
\begin{equation*}
\frac{d[\sqrt{p}+\sqrt{1-p}]}{dp}=\frac{1-2p}{2\sqrt{p(1-p)}(\sqrt{p}+\sqrt{1-p})}<0
\end{equation*}for $p\in(1/2,1)$
 that 
\begin{equation*}
\frac{\pi}{2\mathcal{K}(\sqrt{2}/2)}-\Phi_{4p_3-2}(\sqrt{2}/2)=\frac{\pi}{2\mathcal{K}(\sqrt{2}/2)}-\frac{1}{\sqrt{p}+\sqrt{1-p}}
\end{equation*} is strictly decreasing on $(1/2,1)$ with respect to $p$. This implies that
\begin{equation}\label{eq:3.14}
\frac{\pi}{2\mathcal{K}(\sqrt{2}/2)}-\Phi_{4p_3-2}(\sqrt{2}/2)<\frac{\pi}{2\mathcal{K}(\sqrt{2}/2)}-\Phi_{\delta_4}(\sqrt{2}/2)=0.
\end{equation}
Equations \eqref{eq:3.7} and \eqref{eq:3.14} lead to the conclusion that there exists small enough $\tau_2\in(0,\sqrt{2}/2)$ such that $AG_{Q,C}(a,b)>C\left(\sqrt{p_3a^2+(1-p_3)b^2},\sqrt{(1-p_3)a^2+p_3b^2}\right)$ for all $a>b>0$ with $(a-b)/\sqrt{2(a^2+b^2)}\in(\sqrt{2}/2-\tau_2,\sqrt{2}/2)$.
\end{proof}

\medskip

\section{Applications}

In this section, we will present new bounds for the complete elliptic integrals $\mathcal{K}(r)$ and $\mathcal{E}(r)$ on $(0,\sqrt{2}/2)$.

Theorem \ref{th4.1} follows from Theorem \ref{tm3.1}, \ref{tm3.2}, \ref{tm3.3} and \ref{tm3.4} immediately. 

\begin{theorem}\label{th4.1}
Let $r'=\sqrt{1-r^2}$ and 
\begin{align*}
m(r)&=\max\left\{\frac{2}{1+r'},r'^{-\delta_2},\frac{1-\delta_3+\delta_3r'}{r'},\frac{\sqrt{1+\delta_4 rr'}+\sqrt{1-\delta_4 rr'}}{2r'}\right\},\\
M(r)&=\min\left\{\frac{1}{1-\delta_1+\delta_1r'},\frac{1}{\sqrt{r'}},\frac{1+r'}{2r'},\frac{\sqrt{1+\sqrt{2} rr'}+\sqrt{1-\sqrt{2} rr'}}{2r'}\right\},
\end{align*}where $\delta_1,\delta_2,\delta_3$ and $\delta_4$ are defined as in Lemmas \ref{lm2.3}, \ref{lm2.4}, \ref{lm2.5} and \ref{lm2.9}, respectively.
Then the double inequality 
\begin{equation*}
\frac{\pi}{2}m(r)<\mathcal{K}(r)<\frac{\pi}{2}M(r)
\end{equation*}holds for all $r\in(0,\sqrt{2}/2)$.
\end{theorem}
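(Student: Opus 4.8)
The plan is to read each of the four main theorems through the substitution used throughout Section~3 and then simply invert the resulting estimates for $\pi/[2\mathcal{K}(r)]$ into estimates for $\mathcal{K}(r)$. First I would record that the map $(a,b)\mapsto r=(a-b)/\sqrt{2(a^2+b^2)}$ sends the region $a>b>0$ (modulo the common scaling, by homogeneity) bijectively onto the interval $(0,\sqrt{2}/2)$; consequently every $r\in(0,\sqrt{2}/2)$ arises from some admissible pair, so it suffices to prove the claimed inequalities for each such $r$. With this in hand, the two structural identities \eqref{eq:3.1} and \eqref{eq:3.2}, namely $Q(a,b)=r'C(a,b)$ and $AG_{Q,C}(a,b)=\pi C(a,b)/[2\mathcal{K}(r)]$, convert all four double inequalities into statements about the single quantity $AG_{Q,C}(a,b)/C(a,b)=\pi/[2\mathcal{K}(r)]$.

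Next I would translate the three ``algebraic'' theorems. Dividing the bounds of Theorem~\ref{tm3.1} by $C(a,b)$ and using $Q/C=r'$ replaces $AG_{Q,C}$ by $\pi/[2\mathcal{K}(r)]$ and each affine combination by $\alpha r'+(1-\alpha)$; inserting the sharp constants $\alpha_1=\delta_1$ and $\beta_1=1/2$ and inverting yields the pair $\pi/[2(1-\delta_1+\delta_1r')]$ and $\pi/(1+r')$. The same division applied to Theorem~\ref{tm3.2} collapses $Q^{\alpha}C^{1-\alpha}/C$ to $r'^{\alpha}$, so the sharp exponents $\alpha_2=1/2$ and $\beta_2=\delta_2$ give $\pi/(2\sqrt{r'})$ and $(\pi/2)r'^{-\delta_2}$; and Theorem~\ref{tm3.3}, after the corresponding manipulation of the harmonic-type combination, produces $(\pi/2)(1+r')/(2r')$ and $(\pi/2)(1-\delta_3+\delta_3r')/r'$ from $\alpha_3=1/2$ and $\beta_3=\delta_3$. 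In every case the lower estimate for $\mathcal{K}(r)$ comes from the upper bound on $AG_{Q,C}$ and the upper estimate from the lower bound, because inverting a positive inequality reverses it.

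For Theorem~\ref{tm3.4} I would use identity \eqref{eq:3.7}, which expresses the contraharmonic mean of the interpolated arguments as $C(a,b)\,\Phi_{4p-2}(r)$. Dividing by $C(a,b)$ turns both inequalities into comparisons between $\pi/[2\mathcal{K}(r)]$ and $\Phi_{\sqrt{2}}(r)$ (from $\alpha_4=(\sqrt{2}+2)/4$, since $4\alpha_4-2=\sqrt{2}$) or $\Phi_{\delta_4}(r)$ (from $\beta_4=(\delta_4+2)/4$, since $4\beta_4-2=\delta_4$). The only genuine computation here is rationalizing $1/\Phi_\lambda(r)$: multiplying numerator and denominator of $\lambda r/[\sqrt{1+\lambda rr'}-\sqrt{1-\lambda rr'}]$ by the conjugate and using $(1+\lambda rr')-(1-\lambda rr')=2\lambda rr'$ produces the closed form $[\sqrt{1+\lambda rr'}+\sqrt{1-\lambda rr'}]/(2r')$, which is exactly the square-root expression entering $m(r)$ and $M(r)$.

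Finally I would assemble the pieces. Each of the four theorems supplies one lower and one upper bound for $\mathcal{K}(r)$, all valid simultaneously on $(0,\sqrt{2}/2)$; taking the largest of the four lower bounds and the smallest of the four upper bounds gives precisely $(\pi/2)m(r)<\mathcal{K}(r)<(\pi/2)M(r)$. I do not anticipate a real obstacle, as the statement is a bookkeeping corollary of the theorems already proved. The two points requiring a little care are the surjectivity of the substitution onto $(0,\sqrt{2}/2)$ (so that no value of $r$ is omitted) and tracking which direction of each theorem inverts to which side of the bound on $\mathcal{K}(r)$, but neither is deep.
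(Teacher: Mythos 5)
Your proposal is correct and takes essentially the same route as the paper, which gives no details beyond the remark that Theorem \ref{th4.1} follows immediately from Theorems \ref{tm3.1}, \ref{tm3.2}, \ref{tm3.3} and \ref{tm3.4}: your substitution $r=(a-b)/\sqrt{2(a^2+b^2)}$, the identities $Q(a,b)=r'C(a,b)$ and $AG_{Q,C}(a,b)=\pi C(a,b)/[2\mathcal{K}(r)]$, and the inversion of each sharp bound constitute exactly the bookkeeping the paper leaves implicit. All eight translated expressions match the entries of $m(r)$ and $M(r)$, including the rationalization $1/\Phi_\lambda(r)=\left[\sqrt{1+\lambda rr'}+\sqrt{1-\lambda rr'}\right]/(2r')$.
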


Observed that the double inequality 
\begin{equation}\label{eq:4.1}
\frac{\pi^2}{4}<\mathcal{E}(r)\mathcal{K}(r)<\frac{\pi^2}{4\sqrt{r'}}
\end{equation}for $0<r<1$ was presented in  \cite{AVV}. It follows easily from \eqref{eq:4.1} that
\begin{equation}\label{eq:4.2}
\frac{\pi^2}{4\mathcal{K}(r)}<\mathcal{E}(r)<\frac{\pi^2}{4\sqrt{r'}\mathcal{K}(r)}
\end{equation}for $r\in(0,1)$.

The following theorem is derived from Theorem \ref{th4.1} and \eqref{eq:4.2} immediately.

\begin{theorem}
Suppose that $m(r), M(r)$ are defined as in Theorem \ref{th4.1}, then  the double inequality 
\begin{equation*}
\frac{\pi}{2M(r)}<\mathcal{E}(r)<\frac{\pi}{2\sqrt{r'}m(r)}
\end{equation*}holds for all $r\in(0,\sqrt{2}/2)$.
\end{theorem}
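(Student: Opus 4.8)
The plan is to derive both halves of the inequality by composing the two-sided bound on $\mathcal{K}(r)$ supplied by Theorem \ref{th4.1} with the two-sided bound on $\mathcal{E}(r)$ in terms of $1/\mathcal{K}(r)$ recorded in \eqref{eq:4.2}. Since \eqref{eq:4.2} controls $\mathcal{E}(r)$ through the reciprocal $1/\mathcal{K}(r)$, the whole argument reduces to substituting the $\mathcal{K}$-bounds into \eqref{eq:4.2} while tracking the sign reversals that passing to reciprocals induces. The statement is, as the preceding remark indicates, an immediate corollary, so the work is purely algebraic.

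For the lower bound I would begin from the left-hand inequality in \eqref{eq:4.2}, $\mathcal{E}(r)>\pi^2/[4\mathcal{K}(r)]$, which holds on all of $(0,1)$ and in particular on $(0,\sqrt{2}/2)$. Theorem \ref{th4.1} gives $\mathcal{K}(r)<(\pi/2)M(r)$ on $(0,\sqrt{2}/2)$, hence $1/\mathcal{K}(r)>2/[\pi M(r)]$ there. Feeding this into the previous estimate yields $\mathcal{E}(r)>\pi^2/[4\mathcal{K}(r)]>(\pi^2/4)\cdot 2/[\pi M(r)]=\pi/[2M(r)]$, which is exactly the asserted left bound.

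For the upper bound I would start instead from the right-hand inequality in \eqref{eq:4.2}, $\mathcal{E}(r)<\pi^2/[4\sqrt{r'}\,\mathcal{K}(r)]$. The left inequality of Theorem \ref{th4.1}, $\mathcal{K}(r)>(\pi/2)m(r)$, gives $1/\mathcal{K}(r)<2/[\pi m(r)]$ on $(0,\sqrt{2}/2)$; since $\sqrt{r'}>0$ the estimate is preserved after multiplication, so $\mathcal{E}(r)<\pi^2/[4\sqrt{r'}\,\mathcal{K}(r)]<(\pi^2/[4\sqrt{r'}])\cdot 2/[\pi m(r)]=\pi/[2\sqrt{r'}\,m(r)]$, which is the asserted right bound.

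There is no genuine obstacle in this proof; the only points deserving a little care are bookkeeping rather than mathematical depth. First, one must respect the domains: \eqref{eq:4.2} is valid on $(0,1)$ whereas Theorem \ref{th4.1} is only available on $(0,\sqrt{2}/2)$, so the combined conclusion can be claimed solely on the smaller interval $(0,\sqrt{2}/2)$. Second, one must correctly reverse the direction of each inequality when passing from a bound on $\mathcal{K}(r)$ to the corresponding bound on $1/\mathcal{K}(r)$, which is precisely why the upper bound on $\mathcal{E}$ uses the \emph{lower} bound $m(r)$ on $\mathcal{K}$ and the lower bound on $\mathcal{E}$ uses the \emph{upper} bound $M(r)$.
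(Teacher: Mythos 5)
Your proposal is correct and is exactly the paper's argument: the paper derives this theorem "immediately" from Theorem \ref{th4.1} and \eqref{eq:4.2}, and your write-up simply makes explicit the reciprocal substitutions and sign reversals that the paper leaves implicit. Both halves of your algebra check out, including the correct pairing of the upper bound on $\mathcal{E}$ with the lower bound $m(r)$ on $\mathcal{K}$ and vice versa.
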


\newpage

\section*{Acknowledgement}
 
I am grateful to Professor Xingjiang Lu, Professor Shoufeng Shen, Doctor Zhengchao Ji for helpful conversations.

\newpage

\section*{Academic Integrity Statement}
My signature below constitutes my pledge that all of the writing is my own work, with the exception of those portions which are properly documented. 
 
 I understand and accept the following definition of plagiarism: 
 
1.Plagiarism includes the literal repetition without acknowledgment of the writings of another author.  All significant phrases, clauses, or passages in this paper which have been taken directly from source material have been enclosed in quotation marks and acknowledged in the text itself as well as in the list of Works Cited or Bibliography. 
 
2.Plagiarism includes borrowing another’s ideas and representing them as my own.  To paraphrase the thoughts of another writer without acknowledgment is to plagiarize. Plagiarism also includes inadequate paraphrasing.  Paraphrased passages (those put into my own words) have been properly acknowledged in the text and in the bibliography. 
 
3.Plagiarism includes using another person or organization to prepare this paper and then submitting it as my own work.

\bigskip


\begin{thebibliography}{widest-label}

\bibitem{AQ} H. Alzer and S.-L. Qiu, \textit{Monotonicity theorems and inequalities for the complete elliptic integrals,} Journal of Computational and Applied Mathematics, {\bf 172} (2004), 289-312.

\bibitem{AVV} G. D. Anderson, M. K. Vamanamurthy, M. K. Vuorinen, \textit{Conformal Invariants, Inequalities, and Quasiconformal Maps}, JohnWiley \& Sons, New York, NY, USA, (1997).

\bibitem{BB}J. M. Borwein and P. B. Borwein, \textit{Inequalities for compound mean iterations with logarithmic asymptotes,} J. Math. Anal. Appl. {\bf 177} (1993),  572-582. 

\bibitem{Ca} B. C. Carlson, \textit{Hidden symmetries of special functions,} SIAM Review, {\bf 12} (1970), 332-345.

\bibitem{CW} Y.-M. Chu and M.-K. Wang, \text{Inequalities between arithmetic-geometric, Gini, and Toader means,}  Abstract and Applied Analysis, {\bf 2012} (2012), Article ID 830585.

\bibitem{CV} B. C. Carlson and M. K.Vuorinen, \textit{Inequality of the AGM and the logarithmic mean,} SIAM Review. {\bf 33}(1991), 653-654.

\bibitem{DZ}Q. Ding and T. H. Zhao, \textit{Optimal bounds for arithmetic-geometric and Toader means in terms of generalized logarithmic mean}, J. Inequal. Appl. {\bf 2017} (2017),  1-12. 

\bibitem{EBJ} I. Elishakoff, V. Birman, and J. Singer, \textit{Influence of initial imperfections on nonlinear free vibration of elastic bars,} Acta Mechanica,  {\bf 55} (1985), 191-202.

\bibitem{Ho} T. Horiguchi, \textit{Lattice Green?s function for anisotropic trianguar lattice,} Physica A, {\bf 178} (1991), 351-363.

\bibitem{JDZ}Z. C. Ji, Q. Ding and T. H. Zhao, \textit{Optimal inequalities for a Toader-type mean by quadratic and contraharmonic means,} Journal of nonlinear Science and Applications, {\bf 11} (2018), 150-160.

\bibitem{Ku}R. K\"{u}hnau, \textit{Eine Methode, die Positivit\"{a}t einer Funktion zu pr\"{u}fen,} Zeitschrift f\"{u}r Angewandte Mathematik und Mechanik,  {\bf 74} (1994), 140-143.

\bibitem{Le} D. K. Lee, \textit{Application of theta functions for numerical evaluation of complete elliptic integrals of the first and second kinds,}  Computer Physics Communications,  {\bf 60}(1990), 319-327.

\bibitem{Ma} C. C. Maican, \textit{Integral Evaluations Using the Gamma and Beta Functions and Elliptic Integrals in Engineering, }International Press, Cambridge, Mass, USA, 2005.

\bibitem{MF} K. Mayrhofer and F. D. Fischer, \textit{Derivation of a new analytical solution for a general two-dimensional finite-part integral applicable in fracture mechanics,} International Journal for Numerical Methods in Engineering, {\bf 33} (1992), 1027-1047.

\bibitem{NS} E. Neuman and J. S\'{a}ndor, \textit{On certain means of two arguments and their extensions,} Int. J. Math. Math. Sci. {\bf 16} (2003), 981-993.

\bibitem{QV} S.-L. Qiu and M. K. Vamanamurthy, \textit{Sharp estimates for complete elliptic integrals,}  SIAM Journal on Mathematical Analysis, {\bf 27} (1996), 823-834.

\bibitem{Sa} J. S\'{a}ndor, \textit{On certain inequalities for means,} J. Math. Anal. Appl. {\bf 189} (1995), 602-606.

\bibitem{VV} M. K. Vamanamurthy and M. K. Vuorinen, \textit{Inequalities for means,} J. Math. Anal. Appl. {\bf 183} (1994), 155-166.

\bibitem{WQC}H. Wang, W. M. Qian and Y. M. Chu, \textit{Optimal bounds for Gaussian Arithmetic-Geometric mean with applications to complete elliptic integral,} Journal of Function Spaces, {\bf 2016} (2016), Article ID 3698463.

\bibitem{Ya} Z. H. Yang, \textit{A new. proof of inequalities for Gauss compound mean,} Int. J. Math. Anal. {\bf 4} (2010), 1013-1018.

\bibitem{YCZ} Z.-H. Yang, Y.-M. Chu and W. Zhang, \textit{Accurate approximations for the complete elliptic integral of the second kind,} Journal of Mathematical Analysis and Applications, {\bf 438} (2016), 875-888.
 
\bibitem{YSC} Z.-H. Yang, Y.-Q. Song and Y.-M. Chu, \textit{Sharp bounds for the arithmetic-geometric mean,} Journal of Inequalities and Applications, {\bf 2014} (2014), article 192.


\end{thebibliography}
\end{document}